\documentclass{article}
\usepackage{amsthm}
\usepackage{amsmath}
\usepackage{amssymb}
\usepackage{graphicx} 
\usepackage{indentfirst}
\usepackage{color}

\newtheorem{theorem}{Theorem}[section]
\newtheorem{definition}[theorem]{Definition}
\newtheorem{lemma}[theorem]{Lemma}
\newtheorem{proposition}[theorem]{Proposition}

\newtheorem{corollary}[theorem]{Corollary}
\newtheorem{problem}[theorem]{Problem}
\newtheorem{example}[theorem]{Example}

\newcommand{\ord}{\mathrm{ord}}
\newcommand{\supp}{\mathrm{supp}}
\newcommand{\rad}{\mathrm{rad}}

\makeatletter
\newcommand{\blfootnote}[1]{%
  \begingroup
  \renewcommand{\thefootnote}{}%
  \renewcommand{\@makefntext}[1]{\noindent ##1}%
  \footnotetext{#1}%
  \addtocounter{footnote}{-1}%
  \endgroup
}
\makeatother

\numberwithin{equation}{section}

\title{Two local zero-sum problems}
\author{
Weidong Gao \\
Center for Applied Mathematics, Tianjin University \\
Tianjin 300072, P.R. China \\
E-mail: \texttt{wdgao@nankai.edu.cn}
\and
Xiao Jiang \\
School of Mathematical Sciences, Chengdu University of Technology \\
Chengdu 610059, P.R. China \\
E-mail: \texttt{jiangxiao2022@tju.edu.cn}
\and
Yucen Mu \\
Center for Applied Mathematics, Tianjin University \\
Tianjin 300072, P.R. China \\
E-mail: \texttt{yucen4617@tju.edu.cn}
}

\begin{document}

\maketitle

\begin{abstract}
    In the present paper, we investigate two local zero-sum problems. Let $n,k\ge 2$. We denote by $\mathsf{D}^*(n,nk)$ (resp. $\eta^{*}(n,nk)$) the smallest positive integer $\ell$ (if exists) such that, from any given $\ell$ integers not divisible by $n$, one can select some (resp. at most $n$) of them whose sum is divisible by $n$ but not by $nk$.
    We prove that both $\mathsf{D}^*(n,nk)$ and $\eta^{*}(n,nk)$ are equal to $2n-1$ if $\rad(n)\mid\rad(k)$ and infinite otherwise. The corresponding inverse problem is also determined.

    We denote by $\mathsf{D}_n^{\times}$ (resp. $\eta_n^{\times}$) the smallest positive integer $\ell$ such that, from any given $\ell$ integers coprime to $n$, one can select some (resp. at most $n$) of them whose sum $\sigma$ satisfies $\gcd(\sigma, n^2)=n$.
    We prove that $\mathsf{D}_n^{\times}=\eta_n^{\times}=2n-1$ if $n$ is a prime power, and determine its inverse problem.
\end{abstract}

\blfootnote{
\textbf{2020 Mathematics Subject Classification: 11B30, 11B75, 20K01.} \\
\textbf{Keywords: Finite abelian group, Local zero-sum problems, Zero-sum theory.}
}



\section{Introduction and main results}

The zero-sum theory in finite abelian groups is a central topic in additive combinatorics and combinatorial number theory. A basic direct zero-sum problem asks for conditions guaranteeing that every sufficiently long sequence over a finite abelian group contains a non-empty zero-sum subsequence with prescribed properties, while the corresponding inverse problem studies the structure of extremal sequences for which such subsequences do not exist. For a recent systematic survey on the zero-sum problem, we refer to \cite{[Gao2006]}.

This line of research originated in the celebrated theorem of~Erd\H{o}s, Ginzburg and Ziv (see \cite{[EGZ1961]}), which asserts that out of any $2n-1$ integers, one can select exactly $n$ integers whose sum is a multiple of $n$. Soon afterwards, Davenport's questions led to the introduction of the Davenport constant (see \cite{[Olson1 1969], [Olson2 1969], [PD1967]}), and these two directions became the starting point for a broad and influential theory.

Over the years, several classical invariants have been introduced to measure the existence of zero-sum subsequences under various length restrictions, among them the Davenport constant $\mathsf{D}(G)$, the invariant $\eta(G)$, and the Erd\H{o}s--Ginzburg--Ziv constant $\mathsf{s}(G)$ (see \cite{[Gao2006]} and Definition \ref{Definition 2.1}). These invariants are closely connected with inverse problems, additive number theory, and the arithmetic of Krull monoids. For more information, we refer to \cite{[GH2006]}.

Recently, an interesting local zero-sum problem has arisen in the study of certain zero-sum invariants related to the cross number (see \cite{[GHJLW2025]}).
For a prime $p$, $\mathsf{s}_p^*$ is defined as the smallest integer $\ell$ such that every $\ell$ integers coprime to $p$ contain $p$ terms whose sum is divisible by $p$ but not by $p^2$.
Determining $\mathsf{s}_p^*$ may be viewed as a refinement of the Erd\H{o}s--Ginzburg--Ziv Theorem in which one asks not only for divisibility by $p$, but also for a more precise control of the $p$-adic valuation of the resulting sum.
It has been proved that $\mathsf{s}_2^*=3$ and $2p+1\le \mathsf{s}_p^* \le 3p-2$ for an odd prime $p$ (see \cite{[GJLLY2026]}).
And it is conjectured that $\mathsf{s}_p^*=2p+1$ for any odd prime $p$.

Motivated by this circle of ideas, we investigate two local zero-sum problems related to $\mathsf{s}_p^*$ and their inverse problems. For convenience, we introduce some new notations.

\begin{definition}\label{Definition 1.1}
    For integers $n, k\ge 2$, we denote by

    \ensuremath{\bullet} $\mathsf{D}^*(n,nk)$ the smallest positive integer $\ell$ (if exists) such that, from any given $\ell$ integers not divisible by $n$, one can select some of them whose sum is divisible by $n$ but not by $nk$. If no such $\ell$ exists, then we set $\mathsf{D}^*(n,nk)=\infty$.

    \ensuremath{\bullet} $\eta^*(n,nk)$ the smallest positive integer $\ell$ (if exists) such that, from any given $\ell$ integers not divisible by $n$, one can select at most $n$ of them whose sum is divisible by $n$ but not by $nk$. If no such $\ell$ exists, then we set $\eta^*(n,nk)=\infty$.

    \ensuremath{\bullet} $\mathsf{D}^{\times}(n,nk)$ the smallest positive integer $\ell$ such that, from any given $\ell$ integers coprime to $nk$, one can select some integers whose sum $\sigma$ satisfies $\gcd(\sigma, nk)=n$.

    \ensuremath{\bullet} $\eta^{\times}(n,nk)$ the smallest positive integer $\ell$ such that, from any given $\ell$ integers coprime to $nk$, one can select at most $n$ of them whose sum $\sigma$ satisfies $\gcd(\sigma, nk)=n$.

    If $k=n$, then we simply denote
    $\mathsf{D}_{n}^{*}=\mathsf{D}^*(n,n^2)$,
    $\eta_{n}^{*}=\eta^*(n,n^2)$, $\mathsf{D}_{n}^{\times}=\mathsf{D}^{\times}(n,n^2)$
    and $\eta_{n}^{\times}=\eta^{\times}(n,n^2)$.
\end{definition}

The first local zero-sum problem is to determine the values of $\mathsf{D}^*(n,nk)$ and $\eta^*(n,nk)$ for integers $n, k\ge 2$.
Very recently, Sun determined $\mathsf{D}_n^*=2n-1$, as well as the structure of sequences in the corresponding inverse problem (see \cite{[S2026+]}). In the present paper, we have completely solved this problem and its inverse problem. 
We state our results as follows.

\begin{theorem}\label{Theorem 1.2}
    Let $n, k\ge 2$ be integers.

    {\rm (1)} If $\rad(n) \nmid \rad(k)$,
    then $\mathsf{D}^*(n,nk)=\eta^*(n,nk)=\infty$.

    {\rm (2)} If $\rad(n) \mid \rad(k)$,
    then $\mathsf{D}^*(n,nk)=\eta^*(n,nk)=2n-1$.
\end{theorem}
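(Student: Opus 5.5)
For part (1), I would give an explicit infinite family of counterexamples. Suppose $\rad(n)\nmid\rad(k)$, and fix a prime $p$ with $p\mid n$ and $p\nmid k$. Put $a=nk/p$. Since $p\nmid k$, the integer $a$ is not divisible by $n$. Now take a sequence of arbitrary length $\ell$ with every term equal to $a$. A subsequence of length $m$ has sum $ma=(m/p)\cdot nk$. This sum is divisible by $n$ exactly when $p\mid m$, because $a=n\cdot(k/p)$ and $k/p$ has denominator exactly $p$. But whenever $p\mid m$, the sum is also divisible by $nk$. So no selection, of any length, has the required property, and both $\mathsf{D}^*(n,nk)$ and $\eta^*(n,nk)$ are infinite.

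For part (2), note first that $\mathsf{D}^*(n,nk)\le\eta^*(n,nk)$ trivially, since a selection of at most $n$ terms is in particular a selection. So it suffices to prove $2n-1\le \mathsf{D}^*(n,nk)$ and $\eta^*(n,nk)\le 2n-1$.

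For the lower bound, take $n-1$ copies of $1$ and $n-1$ copies of $-1$. A selection of $i$ ones and $j$ minus-ones has sum $i-j$, where $0\le i,j\le n-1$. This sum is divisible by $n$ only when $i=j$, and then the sum is $0$, which is divisible by $nk$. Hence $2n-2$ terms do not suffice.

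For the upper bound, suppose a sequence $S$ of $2n-1$ integers not divisible by $n$ has the property that every selection $T$ of at most $n$ terms with $n\mid\sigma(T)$ also has $nk\mid\sigma(T)$. I will derive a contradiction in three steps.

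\emph{Step 1: an exchange argument.} By the Erd\H{o}s--Ginzburg--Ziv theorem, and by $\eta(C_n)=n$, such short selections exist. Let $T$ be a short selection with $n\mid\sigma(T)$. Take $x$ in $T$ and $y$ outside $T$ with $x\equiv y\pmod n$. Replacing $x$ by $y$ keeps the sum divisible by $n$, so $nk$ divides both $\sigma(T)$ and $\sigma(T)-x+y$, which gives $x\equiv y\pmod{nk}$. I would vary $T$ over the many $n$-term zero-sums mod $n$ that EGZ-type arguments produce inside $S$. The goal is to conclude that equal residues mod $n$ force equal residues mod $nk$, at least on the part of $S$ that matters.

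\emph{Step 2: a lift homomorphism.} Step 1 gives a well-defined lift $\psi$ from the residues mod $n$ occurring in $S$ to $\mathbb{Z}_{nk}$. The hypothesis says that every short zero-sum relation among these residues mod $n$ lifts to a zero-sum mod $nk$. I then want to show that the short zero-sum relations generate the full relation lattice of the residues occurring in $S$. If so, $\psi$ extends to a group homomorphism from the subgroup $H\le C_n$ they generate into $\mathbb{Z}_{nk}$, with $\psi(r)\equiv r\pmod n$.

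\emph{Step 3: the contradiction.} Let $h$ be a generator of $H$, say of order $d\mid n$, with $d>1$ since the terms are nonzero mod $n$. Put $c=\psi(h)$. Then $c\equiv h\pmod n$ and $dc\equiv 0\pmod{nk}$. Using $\rad(n)\mid\rad(k)$, one checks that these two conditions are incompatible. For example, when $H=C_n$ we have $c\equiv 1\pmod n$ together with $k\mid c$, which is impossible since $\gcd(n,k)>1$. The general case of a proper subgroup $H$ reduces to this by scaling.

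The main obstacle is Steps 1--2. One must guarantee enough zero-sums of length at most $n$ to run the exchange argument for every relevant pair of equal residues, and to generate the whole relation lattice. I would first attack this through the structure of sequences of length $2n-1$ over $C_n$: either the sequence has many disjoint short zero-sums, or by the inverse EGZ/Davenport results it is concentrated on few residues. In the concentrated case I would handle things directly, for instance with two residues $a,b$ each of high multiplicity, using the explicit relations $na\equiv 0$ and mixed relations.
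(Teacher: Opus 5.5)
Your part (1), the lower bound, and Step 3 are correct. The witness $nk/p$ works as well as the paper's $nk/p^{v}$, and a lift homomorphism on a nontrivial $H$ is indeed incompatible with $\rad(n)\mid\rad(k)$.

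The gap is that the entire upper bound rests on Steps 1--2. These are only a plan, and the claims you formulate there fail for general sequences of length $2n-1$, because the terms need not be units modulo $n$. Take $n=4$.
\begin{itemize}
\item For residues $2^5\cdot 1\cdot 1$, every subsequence containing exactly one of the two $1$'s has odd sum. So no zero-sum separates them, and the exchange of Step 1 cannot relate their lifts.
\item For residues $2^6\cdot 1$, the $1$ lies in no zero-sum subsequence at all, so every short relation has coefficient $0$ on it. Hence the short relations do not generate the relation lattice of $\{1,2\}$, which contains $(4,0)$ and $(2,1)$. Your $\psi$ is then determined at best on $\{0,2\}$, not on $H=C_4$.
\end{itemize}
Generation is a statement about residues modulo $n$ alone, so Step 2 as formulated is simply false for some sequences of length $2n-1$. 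These patterns do not contradict the theorem: for even $k$ the terms $\equiv 2$ already give a good selection. What they show is that you would have to pass to a suitable nontrivial subconfiguration and prove that its short relations generate all of its relations. That proof must use the exact length $2n-1$. For $1^{n-1}(-1)^{n-1}$, every short relation puts equal total weight on $1$ and on $-1$, so $n\cdot 1=0$ is never generated; this is exactly why $2n-2$ terms fail. Your dichotomy ``many disjoint short zero-sums versus concentration on few residues'' is not developed into such an argument, and that is the whole content of the theorem.

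The paper avoids any global lattice statement. It first proves $\mathsf{D}^*(p,p^2)=2p-1$, where all terms are units modulo $p^2$.
\begin{itemize}
\item Lemma~\ref{Lemma 3.4} (via Corollary~\ref{Corollary 2.5}) is precisely your exchange step.
\item Lemma~\ref{Lemma 3.7} then uses Lemmas~\ref{Lemma 3.5} and~\ref{Lemma 3.6} to build two subsequences of length at most $p$, both with sum divisible by $p$, whose sums differ modulo $p^2$.
\end{itemize}
Composite $n$ is handled by Lemma~\ref{Lemma 4.1}, $\mathsf{D}^*(mn,mnk)\le n(\mathsf{D}^*(m,mk)-1)+\mathsf{D}^*(n,nk)$. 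It is proved by peeling off disjoint short blocks with sums in $nG\setminus nkG$ and applying the $m$-case to the block sums. The hypothesis $\rad(n)\mid\rad(k)$ enters only through $\mathsf{D}^*(p,pk)\le\mathsf{D}^*(p,p^2)$ for primes $p\mid k$, and $\eta^*=\mathsf{D}^*$ follows from Lemma~\ref{Lemma 3.2}. If you want to keep your homomorphism idea, this multiplicative reduction is what makes Step 1 available. You would still need something of the strength of Lemma~\ref{Lemma 3.7} in place of Step 2.
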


\begin{theorem}\label{Theorem 1.3}
    Let $n, k, m$ be positive integers with
    $m \mid k$, $\gcd(m, k/m)=1$ and $\rad(k/m)=\rad(n)>1$.
    Let $S$ be a sequence of $2n-2$ integers, none of which is divisible by $n$. Then $S$ has no subsequence of length at most $n$ whose sum is divisible by $n$ but not by $nk$ if and only if
    there exist integers $c,d$ with $c$ coprime to $n$ such that $n-1$ terms of $S$ are congruent to $d\frac{nk}{m}+cm$ modulo $nk$ and the other $n-1$ terms of $S$ are congruent to $-d\frac{nk}{m}-cm$ modulo $nk$.
    In fact, such a sequence $S$ has no subsequence whose sum is divisible by $n$ but not by $nk$.
\end{theorem}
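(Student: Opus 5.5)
The \emph{if} direction and the final sentence are a direct computation, so I would do them first. Put $u=d\frac{nk}{m}+cm$. Since $m\mid k$, $\gcd(m,k/m)=1$ and $\rad(k/m)=\rad(n)$, we get $\gcd(m,n)=1$, so $u\equiv cm\pmod n$ is a unit modulo $n$. A nonempty subsequence of $S$ with $j$ terms $\equiv u$ and $i$ terms $\equiv -u \pmod{nk}$ has sum $\equiv (j-i)u \pmod{nk}$. Since $0\le i,j\le n-1$, this sum is divisible by $n$ only if $n\mid j-i$, that is, $j=i$, and then the sum is $\equiv 0\pmod{nk}$. Hence $S$ has no subsequence at all whose sum is divisible by $n$ but not by $nk$, which gives both the \emph{if} part and the last assertion. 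I would also note, via the Chinese remainder theorem, that the residues $d\frac{nk}{m}+cm$ with $\gcd(c,n)=1$ are exactly the residues modulo $nk$ that are coprime to $n$. Indeed, $\gcd(nk/m,m)=1$ and $\rad(nk/m)=\rad(n)$. Modulo $m$ the first summand $d\frac{nk}{m}$ is arbitrary, and modulo $nk/m$ the second summand $cm$ ranges over the units. So in the \emph{only if} direction it suffices to show that the terms of $S$ are $n-1$ copies of some $u$ and $n-1$ copies of $-u$ modulo $nk$, with $\gcd(u,n)=1$.

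For the \emph{only if} direction I would proceed in two stages. The first stage is to determine $S$ modulo $n$. Let $\bar S$ be the image of $S$ in $\mathbb Z_n$; it consists of $2n-2$ nonzero elements. The hypothesis says that every zero-sum subsequence of $\bar S$ of length at most $n$ comes from a subsequence of $S$ whose integer sum is $\equiv 0 \pmod{nk}$. The target is $\bar S=a^{n-1}(-a)^{n-1}$ with $a$ a generator of $\mathbb Z_n$. The main tool is an exchange principle. Suppose $U$ is a short subsequence of $S$ with $n\mid\sigma(U)$, $x\in U$, and $y\in SU^{-1}$ with $y\equiv x \pmod n$. Then swapping $y$ for $x$ produces another short subsequence with sum divisible by $n$, and comparing the two sums gives $x\equiv y \pmod{nk}$. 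Similarly, if $x+y\equiv 0\pmod n$ for two terms, then $x+y\equiv 0\pmod{nk}$. I would also use the hypothesis in another way: append to $S$ a suitably chosen extra integer $z$ (for instance $z\equiv x\pmod n$ but $z\not\equiv x \pmod{nk}$ for a term $x$). By Theorem~\ref{Theorem 1.2}(2), the resulting sequence of length $2n-1$ has a good subsequence of length at most $n$, and this subsequence must use $z$. Combined with the exchange principle, this should force strong rigidity of $\bar S$. I would then invoke the structure theory of long zero-sum free sequences in cyclic groups (Savchev--Chen) together with the inverse theorem for $\eta(C_n)=n$ / $\mathsf s(C_n)=2n-1$. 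The goal is to show that any $\bar S$ not of the form $a^{n-1}(-a)^{n-1}$ contains two short zero-sum subsequences related by a single exchange of non-congruent (mod $nk$) terms, or whose difference yields a short zero-sum with sum $\not\equiv0\pmod{nk}$, contradicting the hypothesis.

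I expect this first stage to be the main obstacle. The difficulty is that the hypothesis only controls sums modulo $nk$, so one needs enough zero-sum subsequences of $\bar S$ to connect through exchanges. I would first try to mimic the argument used for the upper bound $\eta^*(n,nk)\le 2n-1$ in Theorem~\ref{Theorem 1.2}(2), tracking where equality can hold. Throughout, the condition $\rad(n)\mid\rad(k)$ ensures that multiples of units by nonzero residues modulo $n$ cannot accidentally vanish modulo $nk$.

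The second stage is the lift modulo $nk$. Once $\bar S=a^{n-1}(-a)^{n-1}$ with $a$ a unit, take any terms $x\equiv a$ and $y\equiv -a \pmod n$. The pair $xy$ has length $2\le n$ and sum divisible by $n$, so $x+y\equiv 0\pmod{nk}$. Fixing $y$ shows that all terms $\equiv a\pmod n$ are congruent modulo $nk$, say to $u$, and all the others are $\equiv -u$. Since $u\equiv a$ is coprime to $n$, the CRT remark from the first paragraph writes $u=d\frac{nk}{m}+cm$ with $\gcd(c,n)=1$, completing the proof.
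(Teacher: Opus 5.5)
Your \emph{if} direction, the CRT description of the admissible residues, and your second stage (lifting $\bar S=a^{n-1}(-a)^{n-1}$ to $u^{n-1}(-u)^{n-1}$ modulo $nk$ via length-$2$ pairs) are correct. The pair argument is essentially how the paper fixes the $C_m$-component at the end of Lemma~\ref{Lemma 5.5}. But the first stage carries all the content of the \emph{only if} direction, and you only list tools and state a goal there.

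Two concrete ingredients are missing. First, nothing in your plan shows that the terms of $S$ are coprime to $n$; for $n=4$, for instance, terms $\equiv 2\pmod 4$ must be ruled out. Both your target ($a$ a generator) and your exchange principle depend on this. The exchange principle presupposes a short $U$ with $n\mid\sigma(U)$ containing $x$ but not $y$. The paper produces such a $U$ in Lemma~\ref{Lemma 3.4} by a Chowla-type covering, which needs units. The paper gets the unit property in Lemma~\ref{Lemma 5.1}:
\begin{itemize}
\item if a prime $p$ divides both $n$ and some term, write $n=pw$ and split $S$ by divisibility by $p$;
\item extract blocks of length $\le p$ from the part prime to $p$, using $\eta^*(p,pk)=2p-1$ as in Lemma~\ref{Lemma 4.1};
\item because at least one term is divisible by $p$, the count reaches $\eta^*(w,wk)=2w-1$, giving a forbidden subsequence of length $\le pw=n$.
\end{itemize}

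Second, even with units, Savchev--Chen and the inverse theorems for $\eta(C_n)$ and $\mathsf{s}(C_n)$ concern sequences in $\mathbb Z_n$ that \emph{avoid} zero-sums. Your $\bar S$ has many short zero-sum subsequences; the hypothesis only says their lifts vanish in $\mathbb Z_{nk}$. So it gives no zero-sum-avoidance property of $\bar S$ to feed into those theorems. The exchange principle only shows that each residue mod $n$ has a single lift, which bounds multiplicities but says nothing about the support.

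The rigidity has to be extracted in $\mathbb Z_{nk}$, and the paper's key idea is a critical-sumset argument. Write $S=S_1S_2$ with $|S_1|=|S_2|=n-1$. Then:
\begin{itemize}
\item $\{0\}\cup\Sigma(S_2)$ covers $\mathbb Z_n$ (Lemma~\ref{Lemma 3.5});
\item $S$ has no subsequence of \emph{any} length with sum in $n\mathbb Z\setminus nk\mathbb Z$ (the reduction in Lemma~\ref{Lemma 3.2}(1));
\item hence every subsum of $S_1$ is determined modulo $nk$ by its residue modulo $n$;
\item so $|\{0\}\cup\Sigma(S_1)|=n=|S_1|+1$ in $\mathbb Z_{nk}$, the equality case of Chowla's bound.
\end{itemize}
The inverse result (Lemma~\ref{Lemma 5.3}, via Lemma~\ref{Lemma 2.6}) then gives $\supp(S_1)\subseteq\{g,-g\}$. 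Letting $S_1$ range over the $(n-1)$-term subsequences through a fixed term $g$ yields $S=g^{n-1}(-g)^{n-1}$. For general $m$ this is done after projecting onto the $C_{nk/m}$-component, where the terms are units (Lemmas~\ref{Lemma 5.4}--\ref{Lemma 5.5}). Without an argument of this kind, your first stage remains a plan rather than a proof.
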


The second local zero-sum problem is about $\mathsf{D}^{\times}(n,nk)$ and $\eta^{\times}(n,nk)$.
It is clear that $\mathsf{D}^{\times}(n,nk)\le \eta^{\times}(n,nk)<\infty$
for any integers $n, k\ge 2$.
And if $k$ and $n$ are powers of the same prime,
then $\mathsf{D}_{n}^{\times}=\mathsf{D}^{\times}(n,nk)$
and $\eta_{n}^{\times}=\eta^{\times}(n,nk)$.
We have solved this problem and its inverse problem in this case, 
and obtain the following two results.

\begin{theorem}\label{Theorem 1.4}
     If $n$ is a prime power, then $\mathsf{D}_{n}^{\times}=\eta_{n}^{\times}=2n-1$.
\end{theorem}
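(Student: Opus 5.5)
The plan is a lower bound by explicit construction and an upper bound obtained directly from Theorem \ref{Theorem 1.2}, applied with $k=p$ rather than $k=n$. Write $n=p^a$ with $p$ prime and $a\ge 1$. Then an integer is coprime to $n$ exactly when it is not divisible by $p$. Also, for an integer $\sigma$, the condition $\gcd(\sigma,n^2)=n$ says exactly that $v_p(\sigma)=a$, i.e.\ $n\mid\sigma$ but $np\nmid\sigma$. Since selecting at most $n$ terms is a special case of selecting some terms, $\mathsf{D}_n^{\times}\le\eta_n^{\times}$ always holds. It therefore suffices to prove $\mathsf{D}_n^{\times}\ge 2n-1$ and $\eta_n^{\times}\le 2n-1$.

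\emph{Lower bound.} Take the sequence of $2n-2$ integers consisting of $n-1$ copies of $1$ and $n-1$ copies of $-1$; all of them are coprime to $n$. Any nonempty subsequence has sum $j-i$ with $0\le i,j\le n-1$, so $|j-i|\le n-1$. Such a sum is divisible by $n$ only when it equals $0$. But $\gcd(0,n^2)=n^2\neq n$, so no subsequence of any length has the required property. Hence $\mathsf{D}_n^{\times}\ge 2n-1$.

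\emph{Upper bound.} Let $S$ be any $2n-1$ integers coprime to $n$. In particular none of them is divisible by $n$. Apply Theorem \ref{Theorem 1.2}(2) with the pair $(n,k)=(p^a,p)$; this is allowed because $\rad(n)=p=\rad(k)$. It gives $\eta^*(n,np)=2n-1$. So $S$ has a subsequence of length at most $n$ whose sum $\sigma$ satisfies $n\mid\sigma$ and $np\nmid\sigma$. Then $v_p(\sigma)=a$, hence $\gcd(\sigma,n^2)=p^a=n$. This shows $\eta_n^{\times}\le 2n-1$, and combining the two bounds gives the theorem.

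The only point needing care is the choice $k=p$ in the upper bound. Applying Theorem \ref{Theorem 1.2} with $k=n$ would only guarantee $a\le v_p(\sigma)\le 2a-1$, which is too weak once $a\ge 2$. With $k=p$ the valuation is pinned to exactly $a$. Everything else reduces to the already established direct result.
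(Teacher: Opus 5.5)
Your proof is correct, but it takes a different route from the paper's. You obtain the upper bound as a corollary of Theorem~\ref{Theorem 1.2}(2) with $k=p$. For $n=p^a$, the condition $\gcd(\sigma,n^2)=n$ means exactly $n\mid\sigma$ and $np\nmid\sigma$. Integers coprime to $n$ are a special case of integers not divisible by $n$. Hence $\eta_n^{\times}\le\eta^*(n,np)=2n-1$, and you rightly note that $k=n$ would be too weak once $a\ge 2$. This is not circular: the paper's proof of Theorem~\ref{Theorem 1.2}(2), via Lemmas~\ref{Lemma 3.2}(1), \ref{Lemma 3.7} and \ref{Lemma 4.1}, borrows only the extraction technique of Theorem~\ref{Theorem 1.4}, not its statement.

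The paper instead proves Theorem~\ref{Theorem 1.4} first and directly. Working in $C_{p^{2e}}$, it shows by induction on $j\in[0,e]$ that $S$ contains $2p^{e-j}-1$ disjoint blocks of length at most $p^j$ whose sums have $p$-adic valuation exactly $j$. Level $j$ comes from level $j-1$ by repeatedly applying $\eta_p^{\times}=2p-1$ to the block sums. Each application consumes at most $p$ blocks, which yields $\lceil (2p^{e-j+1}-1-(2p-2))/p\rceil=2p^{e-j}-1$ new ones.

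Unwinding Lemma~\ref{Lemma 4.1} for the pair $(p^a,p)$ gives a similar layered argument. However, that version must handle inputs of mixed valuation, bounds $\mathsf{D}^*$, and recovers short subsequences only through $\eta^*=\mathsf{D}^*$ from Lemma~\ref{Lemma 3.2}(1).

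What your route buys is brevity, and it shows Theorem~\ref{Theorem 1.4} to be the unit-input restriction of the case $k=p$ of Theorem~\ref{Theorem 1.2}. The paper's route is self-contained from the prime case, controls subsequence lengths directly, and supplies the technique later reused in Lemma~\ref{Lemma 4.1}.
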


\begin{theorem}\label{Theorem 1.5}
    Let $n$ be a power of a prime $p$.
    Let $S$ be a sequence of $2n-2$ integers coprime to $n$.
    Then $S$ has no subsequence of length at most $n$ whose sum $\sigma$ satisfies $\gcd(\sigma, n^2)=n$ if and only if there exists an integer $c$ coprime to $n$ such that $n-1$ terms of $S$ are congruent to $c$ modulo $np$ and the other $n-1$ terms of $S$ are congruent to $-c$ modulo $np$.
    In fact, such a sequence $S$ has no subsequence whose sum $\sigma$ satisfies $\gcd(\sigma, n^2)=n$.
\end{theorem}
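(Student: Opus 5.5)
Theorem 1.5 should follow directly from Theorem 1.3 once we note that, for $n=p^a$, the arithmetic condition $\gcd(\sigma,n^2)=n$ is a purely $p$-adic one. Since the only prime dividing $n^2=p^{2a}$ is $p$, we have $\gcd(\sigma,n^2)=p^{\min(v_p(\sigma),2a)}$. Hence $\gcd(\sigma,n^2)=n$ if and only if $v_p(\sigma)=a$, that is, $n\mid\sigma$ but $np\nmid\sigma$. So the property ``$S$ has no subsequence of length at most $n$ whose sum $\sigma$ satisfies $\gcd(\sigma,n^2)=n$'' is exactly the property ``$S$ has no subsequence of length at most $n$ whose sum is divisible by $n$ but not by $nk$'' with $k=p$.

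I would therefore apply Theorem 1.3 with $k=p$ and $m=1$. Its hypotheses hold: $m\mid k$, $\gcd(m,k/m)=\gcd(1,p)=1$, and $\rad(k/m)=p=\rad(n)>1$. Every term of $S$ is coprime to $n$, hence not divisible by $n$, so Theorem 1.3 applies to $S$. It says the property holds if and only if there are integers $c,d$ with $\gcd(c,n)=1$ such that $n-1$ terms are congruent to $d\cdot np+c$ modulo $np$ and the other $n-1$ terms are congruent to $-d\cdot np-c$ modulo $np$. Since $d\cdot np\equiv 0 \pmod{np}$, these are just $n-1$ terms congruent to $c$ and $n-1$ terms congruent to $-c$ modulo $np$, which is the stated structure.

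For the final ``in fact'' sentence I would either quote the last sentence of Theorem 1.3 or verify it directly. Take a nonempty subsequence with $i$ terms congruent to $c$ and $j$ terms congruent to $-c$ modulo $np$, where $0\le i,j\le n-1$. Its sum is congruent to $(i-j)c$ modulo $np$. If $n$ divides the sum, then $n\mid i-j$ because $c$ is a unit modulo $n$. Since $|i-j|\le n-1$, this forces $i=j$, so the sum is $\equiv 0\pmod{np}$ and therefore $v_p(\sigma)\ge a+1$. In particular $\gcd(\sigma,n^2)\ne n$ for every subsequence, of any length.

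There is no real obstacle beyond the translation step. The only point needing care is that the equivalence $\gcd(\sigma,n^2)=n \iff n\mid\sigma,\ np\nmid\sigma$ genuinely uses that $n$ is a prime power. For general $n$, the condition would also constrain the valuations at the other primes, and this reduction to Theorem 1.3 would not apply.
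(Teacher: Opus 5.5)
Your proposal is correct and is essentially the paper's argument. The paper applies Lemma~\ref{Lemma 5.6}, which is Lemma~\ref{Lemma 5.4} with $n=p^e$, $k=p$, together with the fact that $(p^eG)^{\times}=p^eG\setminus\{0\}$ in $C_{p^{e+1}}$; Theorem~\ref{Theorem 1.3} with $m=1$, $k=p$ unwinds to exactly that lemma, and your observation that $\gcd(\sigma,n^2)=n$ if and only if $n\mid\sigma$ and $np\nmid\sigma$ is the integer form of the same identity, while your direct check of the ``in fact'' clause is fine.
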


The paper is organized in the following way.
Some notations and some preliminary results are gathered in Section 2.
Section 3 provides some essential lemmas,
including the prime case $n=p$ of Theorem \ref{Theorem 1.2} and Theorem \ref{Theorem 1.4}.
Then in Section 4, we prove Theorem \ref{Theorem 1.2} and Theorem \ref{Theorem 1.4}.
In Section 5, we focus on the inverse problem and present the proofs of Theorem \ref{Theorem 1.3} and Theorem \ref{Theorem 1.5}.
Section 6 is devoted to some remarks.

\section{Notation and preliminaries}

We denote by $\mathbb{N}$ the set of positive integers, by $\mathbb{N}_0$ the set $\mathbb{N} \cup \{0\}$,  by $\mathbb{Z}$ the set of integers.
For any $a,b \in \mathbb{Z}$, we set $[a,b]=\{x\in \mathbb{Z}\mid a\le x\le b\}$.
For any $n \in \mathbb{N}$, $\rad(n)=\prod_{p \mid n} p$ is the product of all distinct prime divisors of $n$ with $\rad(1)=1$.
For any real number $x$, let $\lceil x \rceil$ be the smallest integer not less than $x$, and $\lfloor x \rfloor$ be the largest integer no greater than $x$.

Throughout this paper, let $p$ be a prime and $G$ be a finite additive abelian group with identity element $0$.
We denote by $C_n$ a cyclic group of order $n$.
By the structure of finite abelian groups, every non-trivial finite abelian group $G$
can be decomposed as a direct sum of cyclic groups
$$
C_{n_1} \oplus \cdots \oplus C_{n_r},
$$
where the integers $n_1 , \cdots , n_r$ satisfy $ 1 < n_1 \mid \cdots \mid n_r $.
Here $r$ and $n_r$ are called the \textit{rank} and the \textit{exponent} of $G$, respectively.
The exponent of $G$ is denoted by $\exp(G)$,
and the order of an element $g \in G$ is written as $\ord(g)$.
Set
$$
G^{\times}=\{g\in G:\ord(g)=\exp(G)\}.
$$

For a set $X$, we denote by $\mathcal{F}(X)$ the free (abelian, multiplicative)
monoid with basis $X$. The elements of $\mathcal{F}(X)$ are called \emph{sequences} over $X$.
We can write a sequence $S\in \mathcal{F}(X)$ in the form
$$
S=\prod_{g\in X}g^{\mathsf{v}_g(S)},
$$
where $\mathsf{v}_g(S)\in \mathbb{N}_0$ is called the \emph{multiplicity} of the element $g$ in $S$.
We say that $S$ \emph{contains} the element $g$ of $X$ if $\mathsf{v}_g(S)>0$.
A sequence $S'$ is called a \emph{subsequence} of $S$, denoted by $S'\mid S$, if $\mathsf{v}_g(S')\le \mathsf{v}_g(S)$ for all $g\in X$.
The sequence $SS'^{-1}$ denotes the subsequence obtained from $S$ by deleting $S'$. Two subsequences $S_1$ and $S_2$ of $S$ are called \emph{disjoint} if $S_1 | SS_2^{-1}$. The identity element $\emptyset \in \mathcal{F}(X)$ is called the \emph{empty} sequence.

For a group $G$ and a sequence
$$S=g_1\cdots g_l=\prod_{g\in G}g^{\mathsf{v}_g(S)}\in \mathcal{F}(G),$$
we define
\begin{itemize}
\item $|S| = l =\sum_{g\in G}\mathsf{v}_{g}(S)\in \mathbb{N}_{0}$
the \emph{length} of $S$,

\item $\mathsf{h}(S) = \max\{\mathsf{v}_g(S)\ |\ g \in G\}\in[0, |S|]$ the \emph{maximum of the multiplicities} of $S$,

\item $\sigma(S)=\sum_{i=1}^{l}g_{i}=\sum_{g\in G}\mathsf{v}_{g}(S)g
\in G$ the \emph{sum} of $S$,

\item $\supp(S)=\{g\in G\mid \mathsf{v}_g(S)>0\}\subseteq G$ the \emph{support} of $S$,

\item $\Sigma(S)=\{\sigma(T): \ T\mid S,\ |T|\ge 1\}$
the set of sums of all non-empty subsequences of $S$,

\item $\Sigma_{k}(S)=\{\sigma(T): \ T\mid S,\ |T|=k\}$
for all $k\in \mathbb{N}$,

\item $\Sigma_{\le k}(S)=\{\sigma(T): \ T\mid S,\ 1\le |T|\le k\}$
for all $k\in \mathbb{N}$.

\item $S$ a \emph{squarefree sequence} if $\mathsf{v}_{g}(S)\le 1$
for all $g\in G$,

\item $S$ a \emph{zero-sum sequence} if $\sigma(S)=0\in G$,

\item $S$ a \emph{zero-sum free sequence} if $0\not\in \Sigma(S)$,
\end{itemize}

Every map of abelian groups $\varphi:G\longrightarrow H$ can extend to a homomorphism $\varphi:\mathcal{F}(G)\longrightarrow \mathcal{F}(H)$,
where $\varphi(S)=\varphi(g_1)\cdots \varphi(g_l)$.
If $\varphi$ is a homomorphism,
then $\varphi(S)$ is a zero-sum sequence over $H$
if and only if $\sigma(S)\in \ker(\varphi)$.
Moreover, $\varphi( \sigma(S) )=\sigma( \varphi(S) )$.

We collect some known results which will be used later.

\begin{definition}\label{Definition 2.1}
\cite{[Gao2006]}
    Let $G$ be a finite abelian group of exponent $\exp(G)$.
    We denote by

    \ensuremath{\bullet} $\mathsf{D}(G)$ the smallest integer $\ell$ such that every sequence $S\in\mathcal{F}(G)$ of length $|S|\ge \ell$ has a non-empty zero-sum subsequence.

    \ensuremath{\bullet} $\eta(G)$ the smallest integer $\ell$ such that every sequence $S\in\mathcal{F}(G)$ of length $|S|\ge \ell$ has a zero-sum subsequence $T$ of length $1\le|T|\le \exp(G)$.

    \ensuremath{\bullet} $\mathsf{s}(G)$ the smallest integer $\ell$ such that every sequence $S\in\mathcal F(G)$ of length $|S|\ge \ell$ has a zero-sum subsequence $T$ of length $\exp(G)$.
\end{definition}

\begin{lemma}\label{Lemma 2.2}
{\cite[Theorem 5.8.3]{[GH2006]}}
    Let $G=C_{n_1}\oplus C_{n_2}$ with $1\le n_1 \mid n_2$. Then
    $$\eta(G)=2n_1+n_2-2 \quad \text{and}\quad \mathsf{s}(G)=2n_1+2n_2-3.
    $$
\end{lemma}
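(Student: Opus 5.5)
This is a classical result quoted from \cite[Theorem 5.8.3]{[GH2006]}, and I would prove it the standard way: explicit extremal sequences for the lower bounds, and for the upper bounds a reduction to the elementary $p$-group $C_p\oplus C_p$ through an inductive ``multiplicativity'' argument. Write $G=C_{n_1}\oplus C_{n_2}$ with basis $e_1,e_2$, where $\ord(e_i)=n_i$.

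\emph{Lower bounds.} For $\eta$, take
$$S=e_1^{\,n_1-1}\,e_2^{\,n_2-1}\,(e_1+e_2)^{\,n_1-1},$$
of length $2n_1+n_2-3$. A subsequence $e_1^ae_2^b(e_1+e_2)^c$ has sum $(a+c)e_1+(b+c)e_2$. This is zero exactly when $n_1\mid a+c$ and $n_2\mid b+c$. The bounds on the exponents force either $a=b=c=0$, or a length exceeding $n_2$. Hence $\eta(G)\ge 2n_1+n_2-2$.

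For $\mathsf{s}$, take
$$S=0^{\,n_2-1}\,e_1^{\,n_1-1}\,e_2^{\,n_2-1}\,(e_1+e_2)^{\,n_1-1},$$
of length $2n_1+2n_2-4$, and check in the same way that it has no zero-sum subsequence of length $n_2$. Hence $\mathsf{s}(G)\ge 2n_1+2n_2-3$.

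\emph{Upper bounds, prime case.} For $G=C_p\oplus C_p$ one needs $\mathsf{s}(G)\le 4p-3$, which is Reiher's theorem (Kemnitz's conjecture), and $\eta(G)\le 3p-2$. The plan follows Reiher. Apply Chevalley--Warning to systems of polynomials over $\mathbb{F}_p$ of the form $\sum x_i^{p-1}$, $\sum a_ix_i^{p-1}$ and $\sum b_ix_i^{p-1}$. This gives, for any sequence of $3p$, $3p-1$, $3p-2$ or $4p-3$ terms, parity and counting congruences modulo $p$ for the numbers $N_k$ of zero-sum subsequences of length $k$. Combining these congruences over the different lengths shows that a sequence of length $4p-3$ with no zero-sum subsequence of length $p$ leads to a contradiction. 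The bound $\eta\le 3p-2$ comes from the same counting: it yields a zero-sum subsequence of length $p$ or $2p$ inside any $3p-2$ terms, and the length-$2p$ case is then split using $\mathsf{D}(C_p\oplus C_p)=2p-1$ (Olson).

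\emph{Upper bounds, general case.} Induct on $n_1n_2$. Choose a prime $p\mid n_1$ and consider the map $\varphi\colon G\to H=C_{n_1/p}\oplus C_{n_2/p}$ given by multiplication by $p$, whose kernel is $C_p\oplus C_p$. Repeatedly extract disjoint zero-sum-in-$H$ subsequences of length $n_2/p$, using $\mathsf{s}(H)$ or $\eta(H)$, until roughly $2p-1$ of them exist. Their sums lie in $\ker\varphi\cong C_p\oplus C_p$. Applying the prime case to these sums yields a zero-sum subsequence of length $p\cdot(n_2/p)=n_2$ in $G$. The length count
$$(2p-2)\frac{n_2}{p}+\mathsf{s}(H)\le 2n_1+2n_2-3$$
closes the induction for $\mathsf{s}$, and the analogous count gives the bound for $\eta$. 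When $n_1<n_2$, a prime dividing $n_2/n_1$ reduces via $C_{n_1}\oplus C_{n_2/p}$ together with the cyclic EGZ theorem.

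\emph{Main obstacle.} The real difficulty is the prime case $\mathsf{s}(C_p\oplus C_p)\le 4p-3$. Its Chevalley--Warning bookkeeping is delicate, and I would import Reiher's argument essentially verbatim rather than re-derive it. The inductive step is routine arithmetic once the prime case and the EGZ theorem are available.
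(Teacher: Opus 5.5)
The paper gives no proof of this lemma; it only cites the book of Geroldinger--Halter-Koch. Your outline follows the standard route: extremal sequences, then Reiher's theorem and the Chevalley--Warning/Olson argument for $C_p\oplus C_p$, then a multiplicativity induction. Your two extremal sequences are correct, and so is the prime-case sketch, including splitting a length-$2p$ zero-sum via $\mathsf{D}(C_p\oplus C_p)=2p-1$.

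\textbf{The gap is in the inductive step.} When the kernel of $\varphi$ is $C_p\oplus C_p$, extracting $2p-1$ blocks is not enough. Indeed, $2p-1$ elements of $C_p\oplus C_p$ need not contain $p$ terms summing to $0$; take $e_1^{p-1}e_2^{p-1}(e_1+e_2)$. So ``applying the prime case'' to the block sums gives nothing. You need $\mathsf{s}(C_p\oplus C_p)=4p-3$ blocks of length exactly $n_2/p$, respectively $\eta(C_p\oplus C_p)=3p-2$ blocks of length at most $n_2/p$. The length count then becomes
\[
(4p-4)\frac{n_2}{p}+\mathsf{s}\bigl(C_{n_1/p}\oplus C_{n_2/p}\bigr)=2n_1+2n_2-3+2\Bigl(1-\frac1p\Bigr)(n_2-n_1),
\]
and the analogous $\eta$-count overshoots $2n_1+n_2-2$ by the same amount. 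So this reduction closes only when $n_1=n_2$. Your displayed count $(2p-2)\frac{n_2}{p}+\mathsf{s}(H)$ is the count for a \emph{cyclic} kernel $C_p$ via EGZ. The quotient in that case is $C_{n_1}\oplus C_{n_2/p}$, not $C_{n_1/p}\oplus C_{n_2/p}$. With your $H$ the inequality holds numerically, but no valid extraction argument stands behind it.

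\textbf{The fix is a case split.} Your final sentence already contains the first case.
\begin{itemize}
\item If $n_1<n_2$, choose a prime $p\mid n_2/n_1$, take kernel $\langle (n_2/p)e_2\rangle\cong C_p$ and quotient $C_{n_1}\oplus C_{n_2/p}$.
  \begin{itemize}
  \item With $2p-1$ blocks and EGZ you get $\frac{n_2}{p}(2p-2)+2n_1+\frac{2n_2}{p}-3=2n_1+2n_2-3$.
  \item With $p$ blocks and $\eta(C_p)=p$ you get $\frac{n_2}{p}(p-1)+2n_1+\frac{n_2}{p}-2=2n_1+n_2-2$.
  \end{itemize}
\item If $n_1=n_2=n$, choose $p\mid n$, take kernel $C_p\oplus C_p$ and quotient $C_{n/p}\oplus C_{n/p}$. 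With $4p-3$ blocks you get $4n-3$, and with $3p-2$ blocks you get $3n-2$.
\end{itemize}
So the $C_p\oplus C_p$ step must be confined to $n_1=n_2$ and use these corrected block counts.
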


\begin{lemma}\label{Lemma 2.3}
\cite{[D1935]}
    Let $p$ be a prime, and let $A_1, A_2, \ldots, A_k$ be non-empty subsets of $C_p$. Then
    $$
    |A_1+A_2+\cdots +A_k| \ge \min \{p, |A_1|+|A_2|+\cdots +|A_k|-k+1 \}.
    $$
\end{lemma}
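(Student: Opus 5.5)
The plan is to reduce to the two-set case $k=2$ and then induct on $k$. For $k=1$ there is nothing to prove. Suppose the statement holds for $k-1$ sets. Put $B=A_1+\cdots+A_{k-1}$, so that $|B|\ge\min\{p,\sum_{i<k}|A_i|-(k-1)+1\}$. The two-set case gives $|B+A_k|\ge\min\{p,|B|+|A_k|-1\}$. If $|B|=p$ then $B+A_k=C_p$ and we are done. Otherwise $|B|\ge \sum_{i<k}|A_i|-k+2$, and substituting yields $|B+A_k|\ge\min\{p,\sum_{i\le k}|A_i|-k+1\}$. So the whole content lies in the case $k=2$: $|A+B|\ge\min\{p,|A|+|B|-1\}$ for non-empty $A,B\subseteq C_p$.

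For $k=2$ I would use the Combinatorial Nullstellensatz over the field $\mathbb{F}_p\cong C_p$.

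\emph{Reductions.} If $|A|+|B|>p$, then for every $x$ the sets $A$ and $x-B$ intersect by pigeonhole, so $A+B=C_p$. Hence assume $|A|+|B|\le p$, and suppose for contradiction that $A+B\subseteq C$ with $|C|=|A|+|B|-2$.

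\emph{Polynomial.} Consider
$$f(x,y)=\prod_{c\in C}(x+y-c),$$
which vanishes on all of $A\times B$. Its degree is $|A|+|B|-2$. The coefficient of $x^{|A|-1}y^{|B|-1}$ in $f$ is the binomial coefficient $\binom{|A|+|B|-2}{|A|-1}$. This is non-zero modulo $p$ because $|A|+|B|-2<p$.

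\emph{Contradiction.} The Combinatorial Nullstellensatz then gives a point of $A\times B$ where $f$ does not vanish, which is a contradiction.

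The main obstacle is that the Nullstellensatz is not stated in the paper, so I would either cite it or replace it by Davenport's original elementary argument. That argument is an induction on $|B|$ via the $e$-transform. Pick $a\in A$, $b\in B$ and translate so that $0\in A\cap B$. If $A+B\supseteq A\cup B$ with $A\ne C_p$, use that $p$ is prime, so $B\not\subseteq$ any proper stabilizer; hence some $e\in A$ satisfies $e+B\not\subseteq A$. Set
$$A'=A\cup(e+B),\qquad B'=B\cap(A-e).$$
Then $|A'|+|B'|=|A|+|B|$, $|B'|<|B|$, $B'\neq\emptyset$ (it contains $0$), and $A'+B'\subseteq A+B$, so the induction hypothesis applies to $(A',B')$.

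The delicate step is verifying that a suitable $e$ exists, using primality of $p$, together with the base case $|B|=1$. I would present the Nullstellensatz route first, as it is shortest.
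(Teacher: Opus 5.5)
Your proof is correct, though it does not correspond to anything in the paper: the paper gives no proof of this lemma and simply cites Davenport for the classical Cauchy--Davenport theorem. Your reduction from $k$ sets to two sets is the same induction the paper uses implicitly to pass from Lemma~\ref{Lemma 2.4} to Corollary~\ref{Corollary 2.5}. Your Nullstellensatz argument for $k=2$ is the Alon--Nathanson--Ruzsa proof, and its details check out:
\begin{itemize}
\item $A+B$ can be padded to a set $C$ of size exactly $|A|+|B|-2\le p-2$;
\item the coefficient $\binom{|A|+|B|-2}{|A|-1}$ is a unit mod $p$;
\item the case $|A|=|B|=1$ is harmless, since then the product is empty.
\end{itemize}

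The two routes you offer differ in a way that matters for this paper. The Nullstellensatz route is shortest, but it imports Alon's theorem and needs $\mathbb{F}_p$ to be a field, so it says nothing for composite moduli. The $e$-transform route is self-contained and uses primality at exactly one point: when $|B|\ge 2$ and $A\ne C_p$, some $e\in A$ has $e+B\not\subseteq A$. Your sentence about $B$ not lying in ``any proper stabilizer'' is garbled. The correct reason is this: if $A+B\subseteq A$, then $A+b=A$ for every $b\in B$. So a nonzero element of $B$ would lie in the stabilizer of $A$, which is trivial because $p$ is prime and $\emptyset\ne A\ne C_p$.

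If you replace that single step by the hypothesis $0\in B$ and $B\setminus\{0\}\subseteq C_n^{\times}$, the same argument proves Chowla's theorem (Lemma~\ref{Lemma 2.4}) verbatim. That is the form the paper actually relies on, through Corollary~\ref{Corollary 2.5} in Lemmas~\ref{Lemma 3.4}--\ref{Lemma 3.6} and directly in Lemma~\ref{Lemma 5.3}. Lemma~\ref{Lemma 2.3} itself is never invoked later in the paper.
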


\begin{lemma}\label{Lemma 2.4}
\cite{[C1935], [H1990]}
    Let $A,B\subseteq C_n$ with $n\ge 2$ be non-empty subsets.
    If $0\in B$ and $B\setminus\{0\} \subseteq C_n^{\times}$, then
    $$|A+B|\ge \min\{n,\ |A|+|B|-1\}.$$
\end{lemma}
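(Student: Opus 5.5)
This is the classical theorem of Chowla, in the generalized form due to Hamidoune, and the paper only cites it. I would prove it with Kneser's theorem. Set $H=\{g\in C_n: g+A+B=A+B\}$, the stabilizer (period) of $A+B$. Kneser's theorem gives
$$|A+B|\ge |A+H|+|B+H|-|H|.$$
I would then split into three cases according to $H$.

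If $H=C_n$, then $A+B$ is a union of $H$-cosets and is non-empty, so $A+B=C_n$ and $|A+B|=n$. If $H=\{0\}$, Kneser's inequality reads $|A+B|\ge |A|+|B|-1$ directly. In either case the claimed bound $\min\{n,|A|+|B|-1\}$ holds.

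The remaining case is $\{0\}\ne H\subsetneq C_n$, and this is where the hypothesis on $B$ is used. Since $H$ is a proper subgroup, it contains no generator of $C_n$. Hence $B\cap H=\{0\}$, because every element of $B\setminus\{0\}$ lies in $C_n^{\times}$. Let $t$ be the number of cosets of $H$ other than $H$ itself that meet $B$. These $t$ cosets contain all $|B|-1$ non-zero elements of $B$, and each coset has only $|H|$ elements, so $|B|-1\le t|H|$. Because $0\in B$, the set $B+H$ consists of $H$ together with these $t$ cosets. Therefore
$$|B+H|=(t+1)|H|\ge |H|+|B|-1.$$
Combining this with $|A+H|\ge |A|$ in Kneser's inequality gives $|A+B|\ge |A|+|B|-1$.

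The only delicate point is the coset count in the last case: we need the identity coset to contain exactly one element of $B$, and this is exactly what the generator hypothesis provides. If an argument avoiding Kneser is preferred, I would instead use induction on $|B|$ via the Dyson $e$-transform. Take $e\in A$ and $b\in B\setminus\{0\}$ with $e+b\notin A$; such a pair exists unless $A+b=A$, and since $b$ generates $C_n$ that would force $A=C_n$. Replace $(A,B)$ by $(A\cup(e+B),\,B\cap(A-e))$. This preserves $|A|+|B|$, does not enlarge the sumset, keeps $0$ in the second set, and keeps its non-zero elements generators, while strictly shrinking $|B|$. The base case $|B|=1$ is trivial.
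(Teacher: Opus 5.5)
Your proof is correct; both of your routes work. The paper gives no proof of this lemma: it is quoted from Chowla and Hamidoune and used as a black box, through Corollary~\ref{Corollary 2.5} and directly in Lemma~\ref{Lemma 5.3}. So your proposal supplies an argument the paper delegates to the literature.

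Your two routes trade off differently.

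\textbf{Kneser route.} It is short and shows exactly where the hypothesis enters. A proper subgroup of $C_n$ contains no generator, so a nontrivial proper stabilizer $H$ meets $B$ only in $0$. This gives $|B+H|\ge |H|+|B|-1$, which is all Kneser's inequality needs. The cost is that it assumes Kneser's theorem, a much deeper result whose standard proof itself runs on $e$-transforms.

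\textbf{$e$-transform route.} This is essentially Chowla's classical argument and is fully elementary. The map $(A,B)\mapsto (A\cup(e+B),\,B\cap(A-e))$ preserves $|A|+|B|$, does not enlarge the sumset, and keeps $0$ in the second set. Because the new second set is a subset of $B$, it inherits the generator condition automatically. The generator hypothesis is used in only one other place: if no admissible pair $(e,b)$ exists, then $A+b=A$ with $\ord(b)=n$, which forces $A=C_n$. The same induction would go through under the weaker assumption that each nonzero element of $B$ has order at least $|A|+|B|-1$, since that quantity is invariant under the transform.

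If the paper were to be made self-contained, the $e$-transform proof is the natural one to include. The Kneser proof is the quicker choice when Kneser's theorem may be assumed.
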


\begin{corollary}\label{Corollary 2.5}
    Let $A_1,\ldots,A_{\ell}$ be subsets of $C_n$ with integers $\ell, n\ge 2$.
    If $0\in A_i$ and $A_i\setminus\{0\} \subseteq C_n^{\times}$ for every $i\in[1, \ell]$, then
    $$|A_1+\cdots+A_{\ell}|\ge
    \min \left\{n,\ \sum_{i=1}^{\ell} |A_i|-\ell+1 \right\}.$$
\end{corollary}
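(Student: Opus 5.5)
This follows from Lemma \ref{Lemma 2.4} by induction on $\ell$, using two elementary observations about the class of sets
$$\mathcal{A}=\{B\subseteq C_n:\ 0\in B,\ B\setminus\{0\}\subseteq C_n^{\times}\}.$$
The case $\ell=2$ is exactly Lemma \ref{Lemma 2.4} with $A=A_1$ and $B=A_2$. Everything else is bookkeeping.

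For the inductive step, assume the bound holds for $\ell-1$ sets and put $A=A_1+\cdots+A_{\ell-1}$ and $B=A_\ell$.
\begin{itemize}
\item Since $0\in A_i$ for every $i$, the set $A$ is non-empty, and Lemma \ref{Lemma 2.4} places no condition on $A$ beyond non-emptiness.
\item The set $B=A_\ell$ belongs to $\mathcal{A}$ by hypothesis, so Lemma \ref{Lemma 2.4} applies.
\end{itemize}
Lemma \ref{Lemma 2.4} therefore gives
$$|A+A_\ell|\ge \min\{n,\ |A|+|A_\ell|-1\}.$$

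Substituting the induction hypothesis $|A|\ge \min\{n,\ \sum_{i<\ell}|A_i|-(\ell-1)+1\}$, we split into two cases.
\begin{itemize}
\item If $|A|\ge n$, then $|A|=n$ and $A=C_n$, so $A+A_\ell=C_n$ and the bound holds trivially.
\item Otherwise $|A|\ge \sum_{i<\ell}|A_i|-\ell+2$. Adding $|A_\ell|-1$ gives
$$|A|+|A_\ell|-1\ge \sum_{i=1}^{\ell}|A_i|-\ell+1,$$
which yields the claim.
\end{itemize}

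There is no real obstacle. The only point to check is that Lemma \ref{Lemma 2.4} requires the structural hypothesis only on the second summand. Consequently we never need the partial sum $A_1+\cdots+A_{\ell-1}$ to lie in $\mathcal{A}$; it need not, since sums of units need not be units.

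For $\ell=1$ the statement is trivial, and the induction could equally start there. Since the corollary assumes $\ell\ge 2$, I would start at $\ell=2$.
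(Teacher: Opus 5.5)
Your proof is correct and is essentially the paper's argument. The paper likewise proceeds by induction on $\ell$ using Lemma \ref{Lemma 2.4}, applied with $A=A_1+\cdots+A_{\ell-1}$ and $B=A_\ell$; this works because the unit hypothesis is needed only on the second summand, as you note.
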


\begin{proof}
    This result follows from Lemma \ref{Lemma 2.4} by induction on $\ell$.
\end{proof}

\begin{lemma}\label{Lemma 2.6}
\cite[Lemma 2.4]{[WT2024]}
    Let $n\ge 2$ and let $g\in C_n^{\times}.$
    Let $A$ and $B$ be non-empty subsets of $C_n$ such that $\min\{|A|,|B|\}\ge 2$
    and
    $$|A+B|=|A|+|B|-1.$$
    If $A$ is an arithmetic progression with common difference $g$, then $B$ is an
    arithmetic progression with the same common difference $g$.
\end{lemma}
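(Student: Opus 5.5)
Since $g\in C_n^{\times}$ generates $C_n$, there is an isomorphism $C_n\cong\mathbb{Z}/n\mathbb{Z}$ sending $g$ to $1$. Translating $A$ and $B$ does not change $|A|$, $|B|$, $|A+B|$, or whether a set is an arithmetic progression with difference $g$. So I will reduce to $g=1$ and $A=\{0,1,\dots,r-1\}$ with $r=|A|\ge 2$. Then $A+B=B\cup(B+1)\cup\dots\cup(B+r-1)$, and I need to compute $|A+B|$ directly from the shape of $B$.

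\textbf{Step 1: the case $B=C_n$.} Here $|A+B|=n<|A|+|B|-1=n+r-1$, which contradicts the hypothesis. So $B\ne C_n$.

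\textbf{Step 2: block decomposition of $B$.} Write $B$ as a disjoint union of $t\ge 1$ maximal cyclic intervals $I_1,\dots,I_t$ (runs of consecutive residues). Listed cyclically, consecutive intervals are separated by gaps of lengths $h_1,\dots,h_t\ge 1$. Adding $\{0,\dots,r-1\}$ extends each $I_j$ to the right by $r-1$ steps, and these extensions can only fill the gap that follows $I_j$. Hence
$$|A+B|=|B|+\sum_{j=1}^{t}\min\{r-1,h_j\}.$$
The hypothesis $|A+B|=|A|+|B|-1$ becomes
$$\sum_{j=1}^{t}\min\{r-1,h_j\}=r-1,$$
and every summand is at least $1$.

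\textbf{Step 3: conclude $t=1$.} If $t=1$, then $B$ is a single interval, i.e.\ an arithmetic progression with difference $1=g$, and we are done. If $t\ge 2$, each summand is then strictly less than $r-1$, so $\min\{r-1,h_j\}=h_j$ for every $j$. This means every gap is completely filled, so $A+B=C_n$, and the hypothesis reduces to $n=r+|B|-1$.

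\textbf{Main obstacle.} The case $t\ge 2$ with $A+B=C_n$ cannot be excluded under the literal hypotheses. For example, take $n=7$, $A=\{0,1,2\}$ and $B=\{0,1,3,4,5\}$. Then $A+B=C_7$ and $|A|+|B|-1=7$, but $B$ is not an arithmetic progression with difference $1$, and with $|B|=5<n$ it is not one with difference $-1$ either. So the proof only goes through under the additional assumption $|A+B|<n$ (equivalently $|A|+|B|-1\le n-1$), which is presumably the form in which \cite{[WT2024]} states the result.

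Under that assumption, Step 3 rules out $t\ge 2$, and the lemma follows. When applying the lemma later, I will therefore check that the sumset in question is a proper subset of $C_n$.
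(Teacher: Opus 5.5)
Your proof is correct, and so is your diagnosis: the lemma as printed is false. Your example with $n=7$, $A=\{0,1,2\}$ and $B=\{0,1,3,4,5\}$ is a valid counterexample, since $A+B=C_7$ and $|A+B|=7=|A|+|B|-1$ while $B$ splits into two runs. Even $n=4$, $A=\{0,1,2\}$, $B=\{0,2\}$ works.

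The dropped hypothesis is $|A+B|\le n-1$. Its omission does no damage to the paper. The lemma is used only in the proof of Lemma~\ref{Lemma 5.3}, where it is applied to $A_{j-1}+\{0,g_j\}=A_j$ with $|A_j|=j+1\le \ell+1\le n-1$. So there the sumset is indeed a proper subset of the ambient group $C_n$, and your plan to check this at the point of application is the right one.

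The paper gives no argument of its own for this statement, only a citation to the literature, so there is no internal route to compare. What you supply is a fully elementary, self-contained proof of the corrected statement. Your identity $|A+B|=|B|+\sum_{j}\min\{r-1,h_j\}$ also does more than the lemma requires. It shows the conclusion can fail only when $B$ has $t\ge 2$ runs whose gaps all have length at most $r-2$ and total exactly $r-1$, which forces $A+B=C_n$.

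One phrase in Step 2 needs tightening. The shifted copy $I_j+\{0,\dots,r-1\}$ can run through the following gap, across $I_{j+1}$, and into later gaps. So it is not literally true that it "can only fill the gap that follows $I_j$". Your count is still right, for the following reason:
\begin{itemize}
\item Take an element of the $j$-th gap at distance $d$ from the last element of $I_j$.
\item Going backwards, its nearest element of $B$ is at distance exactly $d$.
\item Hence it lies in $A+B$ if and only if $d\le r-1$.
\end{itemize}
State the justification in that form.
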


\section{Some essential lemmas}
In this section, we provide several essential lemmas that will be used to prove our main results.

By Definition \ref{Definition 1.1}, it is clear that
$\eta^*(n,nk) \ge \mathsf{D}^*(n,nk)$,
$\eta^{\times}(n,nk) \ge \mathsf{D}^{\times}(n,nk)$, $\mathsf{D}_{p}^{*}=\mathsf{D}_{p}^{\times}$
and $\eta_{p}^{*}=\eta_{p}^{\times}$.
And the following proposition helps us handle them in certain cyclic groups
so that many classical methods in the zero-sum theory can be applied.

\begin{proposition}\label{Proposition 3.1}
    Let $G$ be a cyclic group with $nk \mid |G|$ and $n, k\ge 2$.

    {\rm (1)} $\mathsf{D}^{*}(n,nk)$ (resp. $\eta^{*}(n,nk)$) is the smallest integer $\ell$ such that every sequence $S\in\mathcal{F}(G\setminus nG)$ of length $| S | \ge \ell$ has a non-empty subsequence $T$ (resp. of length $1\le|T|\le n$) satisfying $\sigma(T)\in nG\setminus nkG$.

    {\rm (2)} If $\rad(k)=\rad(\frac{|G|}{n})$, then $\mathsf{D}^{\times}(n,nk)$ (resp. $\eta^{\times}(n,nk)$) is the smallest integer $\ell$ such that every sequence $S\in\mathcal{F}(G^{\times})$ of length $| S | \ge \ell$ has a subsequence $T$ (resp. of length $1\le|T|\le n$) satisfying $\sigma(T)\in (nG)^{\times}$.
\end{proposition}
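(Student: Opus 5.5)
We identify $G$ with $\mathbb{Z}/N\mathbb{Z}$, where $N=|G|$, and work through the canonical projection $\pi:\mathbb{Z}\to G$. The proof is a translation of divisibility conditions in $\mathbb{Z}$ into membership conditions in $G$. Since $nk\mid N$, both $n$ and $nk$ divide $N$, so $nG=\pi(n\mathbb{Z})$ and $nkG=\pi(nk\mathbb{Z})$. We will use the fact that, for a divisor $d$ of $N$, an integer $x$ satisfies $\pi(x)\in dG$ if and only if $d\mid x$. This holds because $\pi(x)=\pi(dy)$ means $x-dy\in N\mathbb{Z}\subseteq d\mathbb{Z}$.

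For (1), take any sequence of integers $a_1,\dots,a_\ell$ not divisible by $n$. The sequence $S=\pi(a_1)\cdots\pi(a_\ell)$ then lies in $\mathcal{F}(G\setminus nG)$. A subsequence with index set $I$ has integer sum $s=\sum_{i\in I}a_i$ and group sum $\pi(s)$. By the fact above, $n\mid s$ and $nk\nmid s$ hold exactly when $\pi(s)\in nG\setminus nkG$, and the length condition $|T|\le n$ is unchanged by passing to $G$. Conversely, every sequence over $G\setminus nG$ lifts to integers not divisible by $n$ by choosing representatives. So the two extremal problems have the same set of admissible $\ell$ (possibly empty, giving $\infty$), and the minima coincide.

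For (2), the elements of $G^\times$ are the classes $\pi(a)$ with $\gcd(a,N)=1$. Since $\rad(N)=\rad(nk)$, the condition $\gcd(a,N)=1$ is equivalent to $\gcd(a,nk)=1$; this is where the hypothesis enters, because $\rad(N/n)=\rad(k)$ gives $\rad(N)=\rad(n)\rad(N/n)=\rad(n)\rad(k)=\rad(nk)$. Hence lifting and projecting again gives a bijective correspondence between sequences of integers coprime to $nk$ and sequences over $G^\times$.

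It remains to show that $\pi(s)\in (nG)^\times$ if and only if $\gcd(s,nk)=n$. The group $nG$ is cyclic of order $N/n$, generated by $\pi(n)$. Suppose first $\pi(s)\in nG$. Then $s=nt$ for some integer $t$, and $\pi(s)=t\cdot\pi(n)$ has order $N/n$ exactly when $\gcd(t,N/n)=1$. Since $\rad(N/n)=\rad(k)$, this is equivalent to $\gcd(t,k)=1$, which is in turn equivalent to $\gcd(nt,nk)=n$. If instead $\pi(s)\notin nG$, then $n\nmid s$, so $\gcd(s,nk)\ne n$, and $\pi(s)$ is not in $(nG)^\times$ either. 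Both directions therefore agree, and the equality of the two minima follows as in (1).

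There is one subtlety to handle. In the definitions of $\mathsf D^\times(n,nk)$ and $\eta^\times(n,nk)$ the selected subsequence must be nonempty. The empty sum $0$ does not lie in $(nG)^\times$ because $N/n>1$, so the word "subsequence" in (2) is automatically nonempty and nothing changes. The only step needing real care is the radical bookkeeping that converts $\gcd(t,N/n)=1$ into $\gcd(t,k)=1$; the rest is routine.
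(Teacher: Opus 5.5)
The paper states this proposition without proof, treating it as the routine translation through $\pi:\mathbb{Z}\to\mathbb{Z}/|G|\mathbb{Z}$, and your argument is exactly that translation and is correct. There is one small slip: $\rad$ is not multiplicative, so $\rad(N)=\rad(n)\rad(N/n)$ and $\rad(n)\rad(k)=\rad(nk)$ are false in general. For example, $n=k=2$, $N=8$ satisfies the hypothesis, yet $\rad(8)=2\neq 4=\rad(2)\rad(4)$. Instead argue with prime divisors: a prime divides $N$ if and only if it divides $n$ or $N/n$, if and only if it divides $n$ or $k$, if and only if it divides $nk$. This gives $\rad(N)=\rad(nk)$, which is all you use.
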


First, we establish the equivalence between $\eta^*(n,nk)$ and $\mathsf{D}^*(n,nk)$,
as well as $\eta^{\times}(n,nk)$ and $\mathsf{D}^{\times}(n,nk)$.
This equivalence will significantly simplify the subsequent evaluations.

\begin{lemma}\label{Lemma 3.2}
    Let $n, k\ge 2$ be integers.

    {\rm (1)} If $\mathsf{D}^*(n,nk)< \infty$, we have $\eta^*(n,nk)=\mathsf{D}^*(n,nk)$.

    {\rm (2)} If both $k$ and $n$ are powers of the same prime,
    then $\eta^{\times}(n,nk)=\mathsf{D}^{\times}(n,nk)$.
\end{lemma}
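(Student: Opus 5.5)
The inequalities $\eta^*(n,nk)\ge \mathsf{D}^*(n,nk)$ and $\eta^{\times}(n,nk)\ge \mathsf{D}^{\times}(n,nk)$ are immediate from Definition \ref{Definition 1.1}. So the plan is to prove the reverse inequality with a minimal-counterexample (shortest subsequence) argument. I will work in a cyclic group $G$ with $|G|=nk$ and use Proposition \ref{Proposition 3.1} to translate both invariants into statements about sequences over $G$. The key tool is the fact that every sequence of length at least $n$ over $G/nG\cong C_n$ has a non-empty zero-sum subsequence. This is $\mathsf{D}(C_n)=n$, or equivalently $\eta(C_n)=n$ from Lemma \ref{Lemma 2.2} with $n_1=1$. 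Since $|U|\le \eta(C_n)=n$ there, this subsequence can be taken of length at most $n$.

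For (1), let $\ell=\mathsf{D}^*(n,nk)<\infty$ and let $S\in\mathcal F(G\setminus nG)$ with $|S|\ge \ell$. By Proposition \ref{Proposition 3.1}(1), $S$ has a non-empty subsequence $T$ with $\sigma(T)\in nG\setminus nkG$; choose such a $T$ of minimal length. Suppose, for contradiction, that $|T|>n$.
\begin{enumerate}
\item Project $T$ to $G/nG\cong C_n$. This gives a non-empty subsequence $U\mid T$ with $|U|\le n$ and $\sigma(U)\in nG$.
\item Since $|U|\le n<|T|$, the complement $TU^{-1}$ is non-empty.
\item Both $\sigma(U)$ and $\sigma(TU^{-1})=\sigma(T)-\sigma(U)$ lie in $nG$.
\item Since $\sigma(T)=\sigma(U)+\sigma(TU^{-1})\notin nkG$ and $nkG$ is a subgroup, at least one of $\sigma(U)$, $\sigma(TU^{-1})$ lies outside $nkG$.
\item That part is a strictly shorter non-empty subsequence with sum in $nG\setminus nkG$, contradicting the minimality of $T$.
\end{enumerate}
Hence $|T|\le n$, which gives $\eta^*(n,nk)\le \ell$.

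For (2), let $k$ and $n$ be powers of the same prime $p$, and take $G$ cyclic of order $nk$. Then $\rad(k)=\rad(|G|/n)$, so Proposition \ref{Proposition 3.1}(2) applies, and $\mathsf{D}^{\times}(n,nk)<\infty$ by the remark in the introduction. Here $nG$ is cyclic of order $k$, a $p$-power, so $(nG)^{\times}=nG\setminus pnG$, with $pnG$ a subgroup. The argument is then identical: take a shortest $T\mid S$ with $\sigma(T)\in nG\setminus pnG$ (automatically non-empty), and suppose $|T|>n$. Extract $U$ as above with $|U|\le n$ and $\sigma(U)\in nG$. If both $\sigma(U)$ and $\sigma(TU^{-1})$ lay in $pnG$, their sum $\sigma(T)$ would too, so one of them gives a shorter witness, a contradiction.

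There is no real obstacle; the only point needing care is the group-theoretic description of the target set. In (1) it is $nG\setminus nkG$, and in (2) it is $nG\setminus pnG$; in both cases it is the complement of a subgroup inside the subgroup $nG$. That is exactly what makes the splitting step work. It also explains why the prime-power hypothesis is used in (2): for general $k$ the set $(nG)^{\times}$ is not the complement of a single subgroup of $nG$.
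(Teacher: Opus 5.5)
Your proof is correct and follows essentially the same argument as the paper: the paper also uses $\eta(C_n)=n$ to extract a subsequence $T_1\mid T$ with $1\le|T_1|\le n$ and $\sigma(T_1)\in nG$, and if $T_1$ fails it passes to the shorter $T_1^{-1}T$, repeating until the length is at most $n$; your minimal-length formulation is the same descent. Your description $(nG)^{\times}=nG\setminus pnG$ is a cleaner form of the paper's order argument in (2), and it shows that the argument only needs $k$, not $n$, to be a power of $p$.
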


\begin{proof}
    {\rm (1)} We only need to prove $\eta^*(n,nk) \le \mathsf{D}^*(n,nk)$.
    Let $S\in\mathcal{F}(C_{nk} \setminus nC_{nk})$ with $| S | =\mathsf{D}^*(n,nk)$.
    We are going to prove that $S$ has a subsequence $S'$ with $1 \le | S'| \le n$ and $\sigma(S')\in nC_{nk}\setminus\{0\}$.

    By the definition of $\mathsf{D}^*(n,nk)$, we can select a subsequence $T$ of $S$ satisfying
    \begin{equation}\label{3-1}
        \sigma(T)\in nC_{nk}\setminus nkC_{nk}=nC_{nk}\setminus\{0\}.
    \end{equation}
    If $1 \le |T| \le n$, then $T$ is as desired. Thus, we just need to consider the case $|T| \ge n+1$.

    Let $\pi : C_{nk} \rightarrow C_{nk}/nC_{nk}\cong C_n$ be the canonical homomorphism. Since $\eta (C_n)=n$ by Lemma \ref{Lemma 2.2}, there exists a subsequence $T_1$ of $T$ with $1 \le |T_1| \le n$ and $\sigma(\pi(T_1))=0+nC_{nk}\in C_{nk}/nC_{nk}$.
    It follows from $\pi(\sigma(T_1))=\sigma(\pi(T_1))$ that
    $$\sigma(T_1)\in \ker(\pi)=nC_{nk}.$$

    If $\sigma(T_1)\in nC_{nk}\setminus\{0\}$, then $T_1$ is as desired.
    If $\sigma(T_1)=0$, then let us consider the sequence $T_1^{-1}T$.
    By (\ref{3-1}), we have
    $$\sigma(T_1^{-1}T)=\sigma(T)-\sigma(T_1)\in nC_{nk}\setminus\{0\}$$
    and $1\le |T_1^{-1}T|<|T|$.

    If $1 \le |T_1^{-1}T| \le n$, then $T_1^{-1}T$ is as desired.
    If $|T_1^{-1}T| \ge n+1$, then by repeating the above operation,
    we must obtain a subsequence of $S$ as desired
    since the length of $S$ is finite.

    {\rm (2)} We only need to prove $\eta^{\times}(n,nk) \le \mathsf{D}^{\times}(n,nk)$.
    Let $S\in\mathcal{F}(C_{nk}^{\times})$ with $|S| =\mathsf{D}^{\times}(n,nk)$.
    We are going to prove that $S$ has a subsequence $S'$ with $1 \le |S'| \le n$ and $\sigma(S')\in (nC_{nk})^{\times}$.

    By the definition of $\mathsf{D}^{\times}(n,nk)$, we can select a subsequence $T$ of $S$ satisfying $\sigma(T)\in (nC_{nk})^{\times}$, or equivalently,
    \begin{equation}\label{3-2}
        \ord(\sigma(T))=k=|nC_{nk}|.
    \end{equation}
    If $1 \le |T| \le n$, then $T$ is as desired. Thus, we just need to consider the case $|T| \ge n+1$.

    Let $\pi : C_{nk} \rightarrow C_{nk}/nC_{nk}\cong C_n$ be the canonical homomorphism. Since $\eta (C_n)=n$, there exists a subsequence $T_1$ of $T$ with $1 \le |T_1| \le n$ and $\sigma(\pi(T_1))=0+nC_{nk}\in C_{nk}/nC_{nk}$.
    It follows from $\pi(\sigma(T_1))=\sigma(\pi(T_1))$ that
    $$\sigma(T_1)\in \ker(\pi)=nC_{nk}.$$

    If $\sigma(T_1)\in (nC_{nk})^{\times}$, then $T_1$ is as desired.
    If $\sigma(T_1)\in nC_{nk} \setminus (nC_{nk})^{\times}$, then by (\ref{3-2}), we have $\ord(\sigma(T_1))\mid |nC_{nk}|=\ord(\sigma(T))$ and $\ord(\sigma(T_1)) \neq \ord(\sigma(T))$, which imply that
    \begin{equation}\label{3-3}
       \ord(\sigma(T)-\sigma(T_1))=\ord(\sigma(T))=|nC_{nk}|
    \end{equation}
    since $C_{nk}$ is a cyclic $p$-group.

    Let us consider the sequence $T_1^{-1}T$.
    If $1 \le |T_1^{-1}T| \le n$, then $T_1^{-1}T$ is as desired since $\ord(\sigma(T_1^{-1}T))=\ord(\sigma(T)-\sigma(T_1))=|nC_{nk}|$ by (\ref{3-3}).
    If $|T_1^{-1}T| \ge n+1$, then by repeating the above operation,
    we must obtain a subsequence of $S$ as desired
    since the length of $S$ is finite.
\end{proof}

By constructing a specific sequence, the following lemma provides a lower bound for $\mathsf{D}^*(n,nk)$ and $\mathsf{D}^{\times}(n,nk)$.

\begin{lemma}\label{Lemma 3.3}
    Let $n, k\ge 2$ be integers and $g\in C_{nk}^{\times}$. Set
    \begin{equation*}
        S=g^{n-1}(-g)^{n-1}.
    \end{equation*}
    Then $S$ has no subsequence $T$ such that $\sigma(T)\in nC_{nk}\setminus\{0\}.$
\end{lemma}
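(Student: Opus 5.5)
Every non-empty subsequence $T$ of $S$ has the form $T=g^{a}(-g)^{b}$ with $a,b\in[0,n-1]$ and $a+b\ge 1$, so we simply compute its sum and show it cannot land in $nC_{nk}\setminus\{0\}$. Writing $j=a-b$, we have
\[
\sigma(T)=(a-b)g=jg, \qquad j\in[-(n-1),\,n-1].
\]

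The key fact we use is that $g\in C_{nk}^{\times}$, so $\ord(g)=nk$ and $g$ generates $C_{nk}$. Consequently $jg\in nC_{nk}$ holds if and only if $n\mid j$. Indeed, $nC_{nk}=\langle ng\rangle$ is the unique subgroup of index $n$, and $jg$ lies in it exactly when $j\equiv 0\pmod n$, because the map $\mathbb{Z}\to C_{nk}$, $x\mapsto xg$, is surjective with kernel $nk\mathbb{Z}$.

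Now suppose $\sigma(T)\in nC_{nk}$. Then $n\mid j$, and since $|j|\le n-1$ this forces $j=0$, whence $\sigma(T)=0$. Therefore $\sigma(T)\notin nC_{nk}\setminus\{0\}$ for every subsequence $T$, which is the claim. This also covers the empty subsequence, whose sum is $0$.

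There is no real obstacle here. The only point needing care is the subgroup-membership equivalence $jg\in nC_{nk}\iff n\mid j$, and this is immediate from the cyclicity of $C_{nk}$ and the fact that $g$ has full order $nk$.
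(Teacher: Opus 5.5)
Your proposal is correct and follows essentially the same argument as the paper: write $T=g^a(-g)^b$, note that $\sigma(T)=(a-b)g\in nC_{nk}$ forces $n\mid a-b$, and use $|a-b|\le n-1$ to get $a=b$, so $\sigma(T)=0$. The only addition is your explicit justification of $jg\in nC_{nk}\iff n\mid j$ via $\ord(g)=nk$, which the paper leaves implicit.
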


\begin{proof}
    For any non-empty subsequence $T \mid S$, $T$ can be written as $T=g^a(-g)^b$,
    where $0\le a, b\le n-1$ and $a+b\ge 1$.
    If $\sigma(T)=(a-b)g \in nC_{nk}$, then we have $a-b\equiv 0 \pmod n$. Since $0\le a,b\le n-1$, this congruence forces $a=b$. Thus, we must have $\sigma(T)=0\in nC_{nk}$.
\end{proof}

From the above lemmas, we get
\begin{equation*}
    \eta^*(n,nk) = \mathsf{D}^*(n,nk) \ge 2n-1
    \text{~~and~~}
    \eta^{\times}(n,nk) \ge \mathsf{D}^{\times}(n,nk) \ge 2n-1.
\end{equation*}

\begin{lemma}\label{Lemma 3.4}
    Let $S\in\mathcal{F}(C_{nk}^{\times})$ with integers $n, k\ge 2$
    be a sequence of length $|S|\ge n+1$.
    If there exist two elements of $S$ with their difference in $nC_{nk}\setminus \{0\}$,
    then $S$ has a subsequence $T$ satisfying $1\le |T| \le n$ and $\sigma(T)\in nC_{nk}\setminus \{0\}$.
\end{lemma}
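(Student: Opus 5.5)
Let $S=g_1g_2\cdots g_\ell$ with $\ell\ge n+1$, where $g_1-g_2\in nC_{nk}\setminus\{0\}$. The idea is to find a short subsequence $U$ of the remaining terms that is zero-sum modulo $nC_{nk}$ and can be completed by either $g_1$ or $g_2$. The two resulting sums differ by $g_1-g_2\neq 0$ and both lie in $nC_{nk}$, so at least one of them is nonzero.

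Let $\pi:C_{nk}\to C_{nk}/nC_{nk}\cong C_n$ be the canonical map. Since $g_1-g_2\in nC_{nk}$, we have $\pi(g_1)=\pi(g_2)=:h$. Because $g_1\in C_{nk}^{\times}$, $h$ has order $n$, so $h\in C_n^{\times}$. Put $W=g_1^{-1}g_2^{-1}S$, so $|W|=\ell-2\ge n-1$, and take any subsequence $W'$ of $W$ with $|W'|=n-1$.

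By the Davenport argument (or Lemma \ref{Lemma 2.2}, since $\mathsf{D}(C_n)=\eta(C_n)=n$), the $n$ elements $\pi(g_1)\pi(W')$ contain a nonempty zero-sum subsequence. If it avoids $g_1$, it is a subsequence $V\mid W'$ with $1\le|V|\le n-1$ and $\sigma(V)\in nC_{nk}$. In that case, if $\sigma(V)\neq 0$ we are done. Otherwise, look at $Vg_1$ and $Vg_2$: their images have sum $h\neq0$, so this branch gives nothing, and we fall back to the general approach below.

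The cleaner route is this. We want $U\mid W'$ with $\pi(\sigma(U))=-h$ and $|U|\le n-1$. Then $Ug_1$ and $Ug_2$ both have length at most $n$ and sums in $nC_{nk}$, and these sums differ by $g_1-g_2\neq0$, so one of them is nonzero, as required. Such a $U$ exists unless $-h\notin\Sigma(\pi(W'))$.

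To get it, apply Corollary \ref{Corollary 2.5} with $A_i=\{0,\pi(w_i)\}$ for the $n-1$ terms $w_i$ of $W'$. Each $\pi(w_i)$ lies in $C_n^{\times}$, since every $w_i\in C_{nk}^{\times}$. The corollary gives
\[
\Bigl|\bigl(\Sigma(\pi(W'))\cup\{0\}\bigr)\Bigr|\ge\min\{n,\,2(n-1)-(n-1)+1\}=n,
\]
so $\Sigma(\pi(W'))\cup\{0\}=C_n$. Since $-h\neq 0$, this yields a nonempty $U\mid W'$ with $\pi(\sigma(U))=-h$ and $|U|\le n-1$.

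The key points are that $\pi$ maps the units of $C_{nk}$ into $C_n^{\times}$, which is what makes Corollary \ref{Corollary 2.5} applicable, and that pairing $U$ with $g_1$ versus $g_2$ forces a nonzero sum. The earlier Davenport branch is then unnecessary. The main thing to check is the length count $|W|\ge n-1$, which is exactly where the hypothesis $\ell\ge n+1$ is used.
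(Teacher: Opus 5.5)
Your proof is correct and follows the paper's argument: Corollary \ref{Corollary 2.5}, applied to the sets $\{0,\pi(w_i)\}$ for $n-1$ of the remaining terms, produces $U$ with $\pi(\sigma(U))=-\pi(g_1)$, and then $Ug_1$ and $Ug_2$ have sums in $nC_{nk}$ differing by $g_1-g_2\neq 0$, so one of them is nonzero. The abandoned Davenport-constant branch contributes nothing and can simply be deleted.
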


\begin{proof}
    Without loss of generality, let $S=g_1g_2\cdots g_{n}g_{n+1}$ with
    $$
    g_{n+1}-g_{n}\in nC_{nk}\setminus \{0\}.
    $$
    Let $\pi : C_{nk} \rightarrow C_{nk}/nC_{nk}\cong C_n$ be the canonical homomorphism.
    Set $A_{i}=\{\pi(0), \pi(g_i)\}\subseteq C_{nk}/nC_{nk}$ for each $i\in[1, n-1]$.
    By Corollary \ref{Corollary 2.5}, we obtain
    \begin{equation*}
        | A_1+\cdots+A_{n-1} | \ge \min\{n, 2(n-1)-(n-1)+1\}=n,
    \end{equation*}
    which implies
    $$-\pi(g_n) \in C_{nk}/nC_{nk}=A_1+\cdots+A_{n-1}.$$
    Therefore, there is a subsequence $U$ of $g_1g_2\cdots g_{n-1}$
    with $0 \le |U| \le n-1$ such that
    $\pi(\sigma(g_nU))=\pi(g_n)+\sigma(\pi(U))=\pi(0)=0+nC_{nk}$.
    Since $g_{n+1}-g_{n}\in nC_{nk}\setminus \{0\}$, we have
    $$\sigma(g_nU)\in nC_{nk}, \sigma(g_{n+1}U)\in nC_{nk}
    \text{~~and~~}\sigma(g_{n}U)\neq \sigma(g_{n+1}U).$$
    Thus, either $\sigma(g_{n}U)$ or $\sigma(g_{n+1}U)$ is in $nC_{nk}\setminus \{0\}$.
\end{proof}

\begin{lemma}\label{Lemma 3.5}
    For an integer $n\ge 2$, let $S \in \mathcal{F}(C_{n}^{\times})$ and $h\in [1, n]$. If $| S | \ge n-1$ and $\mathsf{h}(S)\le h$, then $C_{n}\setminus\{0\} \subseteq \Sigma_{\le h}(S)$.
\end{lemma}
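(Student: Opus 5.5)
We reduce the statement to a sumset estimate in $C_n$ by grouping the terms of $S$ according to their values. Write $S=\prod_{g}g^{\mathsf{v}_g(S)}$ with distinct elements $g_1,\dots,g_t\in C_n^{\times}$ and multiplicities $v_i=\mathsf{v}_{g_i}(S)$. Then $\sum_i v_i=|S|\ge n-1$ and every $v_i\le h$. The aim is to show that every nonzero element of $C_n$ is the sum of some nonempty subsequence of length at most $h$. Simply taking all subsequences would give $C_n\setminus\{0\}\subseteq\Sigma(S)$, but without control of the length. So instead we build the sumset one layer at a time, so that each chosen summand uses at most one copy of each distinct element.

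\textbf{Layers.} Decompose $S=S_1S_2\cdots S_h$, where $S_j$ is the squarefree sequence consisting of those $g_i$ with $v_i\ge j$. Thus each $S_j$ takes one copy of each value that occurs at least $j$ times, and the number of nonempty layers is $\mathsf{h}(S)\le h$. Note that $S_1\cdots S_{\mathsf{h}(S)}$ is exactly $S$, and $\sum_j|S_j|=|S|$.

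\textbf{Sumset sets.} For each layer set $A_j=\{0\}\cup\supp(S_j)\subseteq C_n$. Since $S_j$ is squarefree, $|A_j|=|S_j|+1$. Moreover $0\in A_j$ and $A_j\setminus\{0\}\subseteq C_n^{\times}$, so Corollary~\ref{Corollary 2.5} applies. If $\mathsf{h}(S)=1$, the single set alone already satisfies $|A_1|=|S|+1\ge n$. Otherwise the corollary gives
\[
|A_1+\cdots+A_{\mathsf{h}(S)}|\ge\min\Bigl\{n,\ \sum_j(|S_j|+1)-\mathsf{h}(S)+1\Bigr\}=\min\{n,|S|+1\}=n.
\]
Hence $A_1+\cdots+A_{\mathsf{h}(S)}=C_n$.

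\textbf{Reading off the subsequence.} Take any $x\in C_n\setminus\{0\}$ and write $x=a_1+\cdots+a_{\mathsf{h}(S)}$ with $a_j\in A_j$. The nonzero $a_j$ are terms taken from the distinct layers $S_j$, so together they form a subsequence $T$ of $S$. Its length is at most $\mathsf{h}(S)\le h$, and $T$ is nonempty because $x\ne0$. Therefore $x\in\Sigma_{\le h}(S)$.

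The only delicate point is this bookkeeping: picking one element from each layer must yield a genuine subsequence. That holds because the multiplicities of the layers add up exactly to $\mathsf{v}_g(S)$. With this in place, the length bound $|T|\le h$ follows directly from the number of layers.
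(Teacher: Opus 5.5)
Your proof is correct and follows the paper's argument: split $S$ into at most $h$ squarefree pieces, adjoin $0$ to each, apply Corollary~\ref{Corollary 2.5} to obtain all of $C_n$, and read off a subsequence that uses at most one term from each piece. The only difference is cosmetic. The paper first reduces to $|S|=n-1$ and forms its $h$ pieces by taking every $h$-th term of a listing in which equal terms are consecutive, padding with $\{0\}$ when $h=n$. Your multiplicity layers (the same decomposition the paper uses in Lemma~\ref{Lemma 3.7}) need neither the reduction nor the padding, and you correctly treat $\mathsf{h}(S)=1$ separately, since the corollary requires $\ell\ge 2$.
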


\begin{proof}
Without loss of generality, let $|S|=n-1$ and
$$
S=g_1\cdots g_{n-1},
$$
where
$g_i\in C_n^\times$ for every $i\in[1,n-1]$.
Moreover, we can require that, if $g_i = g_j$ with $i\le j$, then $g_t = g_i$ for every $t\in [i, j]$.
It follows from $\mathsf{h}(S) \le h$ that
$g_i\neq g_{i+h}$ if $1\le i < i+h \le n-1$.

For every $j\in [1, h]\cap [1, n-1]$, let
$$
A_j=0\prod_{l=0}^{\lfloor \frac{n-1-j}{h} \rfloor}g_{j+lh}
$$
and $A_n=0$ if $h=n$.
Then $0^{h}S=\prod_{j=1}^{h}A_{j}$ and each $A_j$ is squarefree. By Corollary \ref{Corollary 2.5} and regarding a squarefree sequence as a set, we have
$$
\Big| \sum_{j=1}^{h}A_j \Big| \ge \min\{ n, \sum_{j=1}^h| A_j | -h+1\}=n.
$$
Therefore $C_n=\{0\}\cup \Sigma_{\le h}(S)$.
\end{proof}

We further require a structural dichotomy for sequences over $C_{nk}^{\times}$
with bounded multiplicity. This will provide the key alternative from which the
prime case can be completed.

\begin{lemma}\label{Lemma 3.6}
     For integers $n, k\ge 2$, let $S\in \mathcal{F}(C_{nk}^{\times})$ and $h\in[1, n-1]$. If $|S| \ge n $ and $\mathsf{h}(S) \le h$, then at least one of the following statements holds.

    {\rm (1)} There are two subsequences $T_1, T_2$ of $S$ such that $1\le |T_1| , |T_2| \le h$ and $\sigma(T_1)-\sigma(T_2) \in nC_{nk}\setminus \{0\}$;

    {\rm (2)} There is a subsequence $T$ of $S$ such that $1 \le |T| \le h$ and $\sigma(T)\in nC_{nk}\setminus \{0\}$.
\end{lemma}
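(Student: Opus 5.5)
We work with the projection $\pi : C_{nk}\to C_{nk}/nC_{nk}\cong C_n$ and note that $\pi$ maps $C_{nk}^{\times}$ into $C_n^{\times}$. The plan is to find two distinct short subsequences $T_1\neq T_2$ of $S$ (as multisets of indices) whose sums become equal after applying $\pi$. Given such a pair, $\sigma(T_1)-\sigma(T_2)\in nC_{nk}$. If this difference is nonzero we are in case (1). If it is zero, we strip the common part and argue toward case (2).

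First we choose a subsequence $S'\mid S$ with $|S'|=n-1$ and $\mathsf{h}(S')\le h$, which is possible since $|S|\ge n$. By Lemma 3.5, applied to $\pi(S')$ (whose multiplicities are at most $h$, but possibly larger after projection, so see below), every nonzero element of $C_n$ is a sum of at most $h$ terms of $\pi(S')$.

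Let $g$ be a term of $S$ outside $S'$. Then $-\pi(g)\in\Sigma_{\le h}(\pi(S'))$, so there is $U\mid S'$ with $1\le|U|\le h$ and $\pi(\sigma(gU))=0$. This gives $\sigma(gU)\in nC_{nk}$, but $|gU|$ may be $h+1$.

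To stay within length $h$, we compare instead: $\pi(g)\in\Sigma_{\le h}(\pi(S'))$, so some $V\mid S'$ with $1\le |V|\le h$ satisfies $\pi(\sigma(V))=\pi(g)$. Put $T_1=g$ and $T_2=V$. Then $\sigma(T_1)-\sigma(T_2)\in nC_{nk}$. If this difference is nonzero, (1) holds. If it is zero, then $g=\sigma(V)$ in $C_{nk}$, and we vary $g$ over other terms or swap roles to obtain further relations.

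The main obstacle is this degenerate case, where every such coincidence is exact in $C_{nk}$. The remedy I would try is to use the multiplicity structure: repeat the argument with $g$ replaced by each element of $S$ outside various choices of $S'$, and exploit that $\sigma(V)=g$ for several $g$. If $V$ and $V'$ realize the same projected value with $\sigma(V)\neq\sigma(V')$, we get (1). Otherwise all sums of at most $h$ terms are determined by their projection, so $\Sigma_{\le h}(S)$ injects into $C_n$ via $\pi$. Combined with $\pi(0)=0$, this injectivity would force some $T$ with $\sigma(\pi(T))=0$ and $\sigma(T)\neq 0$, giving (2). The alternative is $\sigma(T)=0$ for such $T$, which contradicts injectivity against the empty sum only when a genuinely different representation exists.

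The multiplicity issue in Lemma 3.5 after projection must also be handled. Distinct elements of $C_{nk}^{\times}$ with the same image differ by an element of $nC_{nk}\setminus\{0\}$, and then (1) holds immediately with $T_1,T_2$ of length $1$. So we may assume $\pi$ is injective on $\supp(S)$, and then $\mathsf{h}(\pi(S'))\le h$, which makes Lemma 3.5 applicable.
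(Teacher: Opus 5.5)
Your preliminary reductions are correct. If two distinct terms of $S$ have the same image under $\pi$, then (1) holds with two one-term subsequences. If (1) fails, then any two nonempty subsequences of length at most $h$ with equal projected sums have equal sums in $C_{nk}$.

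\textbf{The gap.} The proof breaks at the final step, where you claim this injectivity ``would force some $T$ with $\sigma(\pi(T))=0$ and $\sigma(T)\neq 0$.'' Nothing you have established gives this. Lemma~\ref{Lemma 3.5}, applied in $C_n$, only shows that short subsequence sums hit the $n-1$ nonzero cosets of $nC_{nk}$. It does not produce even one nonempty $T$ with $|T|\le h$ and $\sigma(T)\in nC_{nk}$, let alone one with nonzero sum. Your attempt via the extra term $g$ gives $gU$ of length up to $h+1$, as you noticed. A sanity check confirms the step cannot follow from your ingredients. Every input to your degenerate case holds verbatim for $k=1$: injectivity of $\pi$ on $\supp(S)$ and on short sums, and Lemma~\ref{Lemma 3.5}. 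But for $k=1$ we have $nC_{nk}\setminus\{0\}=\emptyset$, so (2) is false. Any valid proof must therefore use $k\ge 2$, and yours never does.

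\textbf{The missing idea.} Count in $C_{nk}$ rather than in $C_n$, as the paper does.
\begin{itemize}
\item Take $n$ terms $g_1,\dots,g_n$ of $S$, ordered so that equal terms are consecutive. Since $\mathsf{h}(S)\le h$, we get $g_i\neq g_{i+h}$.
\item Form the layers $B_j=0\prod_{l}g_{j+lh}$ for $j\in[1,h]$. Each is squarefree, contains $0$, has its nonzero elements in $C_{nk}^{\times}$, and $\sum_j|B_j|=n+h$.
\item Corollary~\ref{Corollary 2.5}, applied in $C_{nk}$, gives $|B_1+\cdots+B_h|\ge \min\{nk,\,n+1\}=n+1$. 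This is exactly where $k\ge 2$ is used; in $C_n$ the bound caps at $n$, which is why projecting first loses the argument.
\item Every nonzero element of $B_1+\cdots+B_h$ equals $\sigma(T)$ for some nonempty $T\mid S$ with $|T|\le h$, and there are at least $n$ such elements. There are only $n$ cosets of $nC_{nk}$. Either two of these elements share a coset, which is (1), or they occupy every coset exactly once. In the latter case the one lying in $nC_{nk}$ is a nonzero short sum, which is (2).
\end{itemize}
The paper phrases this as comparing $0^aT_1$ with $0^bT_2$, where $0^h$ plays the role of your empty sum. With the bound $n+1$ your injectivity framing closes; without it, it does not.
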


\begin{proof}
    Assuming that {\rm (1)} does not hold, we will show that {\rm (2)} must then follow. Without loss of generality, let $| S |=n$ and
    $$S=g_1g_2\cdots g_{n}.$$
    Moreover, we can require that, if $g_i = g_j$ with $i\le j$, then $g_t = g_i$ for every $t\in [i, j]$.
    It follows from $\mathsf{h}(S) \le h$ that $g_i\neq g_{i+h}$ if $1\le i < i+h \le n$.

    For every $j\in[1, h]$, set
    $$
    B_j=0\prod_{l=0}^{\lfloor \frac{n-j}{h} \rfloor}g_{j+lh}.
    $$
    Then $0^{h}S=\prod_{j=1}^{h}B_{j}$ and each $B_j$ is squarefree. From Corollary \ref{Corollary 2.5}, we have
    \begin{equation*}
        \Big| \sum_{j=1}^{h}B_j \Big| \ge \min\{ nk, \sum_{j=1}^h| B_j | -h+1\}=n+1.
    \end{equation*}
    Thus, by the Pigeonhole Principle, there are two $h$-length sequences $U=0^{a}T_1$ and $V=0^{b}T_2$ with $a, b\in [0, h]$, $T_1, T_2 \mid S$
    and $\sigma(U)-\sigma(V) \in nC_{nk}\setminus \{0\}$.
    Since (1) does not hold, one of $U$ or $V$ must be of the form $0^h$ and the other one has sum in $nC_{nk}\setminus \{0\}$. So one of $T_1$ or $T_2$ has length in $[1, h]$ and sum in $nC_{nk}\setminus \{0\}$, which is as desired.
\end{proof}

We are now ready to determine the exact values of $\eta_{n}^{*}$, $\mathsf{D}_{n}^{*}$, $\eta_{n}^{\times}$ and $\mathsf{D}_n^{\times}$
when $n$ is a prime number.
Combining the preceding lemmas, we obtain the following result.

\begin{lemma}\label{Lemma 3.7}
    For any prime number $p$, we have $$\eta_{p}^{*}=\mathsf{D}_{p}^{*}=\eta_{p}^{\times}=\mathsf{D}_p^{\times}=2p-1.$$
\end{lemma}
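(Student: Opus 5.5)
By the observations at the start of this section, $\eta_p^*=\eta_p^{\times}$ and $\mathsf{D}_p^*=\mathsf{D}_p^{\times}$. Lemma \ref{Lemma 3.2}(1), or equally Lemma \ref{Lemma 3.2}(2) with $n=k=p$, gives $\eta_p^{\times}=\mathsf{D}_p^{\times}$. Lemma \ref{Lemma 3.3} gives the lower bound $2p-1$. It therefore remains to show $\eta_p^{\times}\le 2p-1$.

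By Proposition \ref{Proposition 3.1}(2) with $G=C_{p^2}$, this amounts to the following claim. Every $S\in\mathcal{F}(C_{p^2}^{\times})$ with $|S|=2p-1$ has a subsequence $T$ with $1\le|T|\le p$ and $\sigma(T)\in pC_{p^2}\setminus\{0\}$. Note that $(pC_{p^2})^{\times}=pC_{p^2}\setminus\{0\}$. Let $\pi:C_{p^2}\to C_p$ be the reduction map. If two terms of $S$ differ by an element of $pC_{p^2}\setminus\{0\}$, Lemma \ref{Lemma 3.4} finishes the proof, since $|S|\ge p+1$. So assume that any two terms of $S$ with the same image under $\pi$ are equal. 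Then $\pi$ is injective on $\supp(S)$, and multiplicities in $S$ coincide with multiplicities in $\pi(S)$.

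We split according to $\mathsf{h}(S)$.

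\emph{Case $\mathsf{h}(S)\ge p$.} Some element $g$ occurs $p$ times. Then $T=g^p$ has length $p$ and $\sigma(T)=pg\neq 0$, because $\ord(g)=p^2$. This is the desired $T$.

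\emph{Case $\mathsf{h}(S)\le p-1$.} Apply Lemma \ref{Lemma 3.6} with $h=p-1$ and $n=k=p$; its hypotheses hold since $|S|\ge p$. Alternative (2) directly gives the desired $T$ with $|T|\le p-1$. In alternative (1) we obtain $T_1,T_2\mid S$ with $1\le|T_1|,|T_2|\le p-1$ and $\sigma(T_1)-\sigma(T_2)\in pC_{p^2}\setminus\{0\}$. This alternative is the main obstacle, because $T_1$ and $T_2$ may overlap, and their sums are congruent but distinct modulo $p^2$.

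To handle it, I would argue as in Lemma \ref{Lemma 3.4}: find a common "completion" $U$, disjoint from both $T_1$ and $T_2$, with $\pi(\sigma(T_1U))=0$ and $|T_iU|\le p$. Then $\sigma(T_1U)$ and $\sigma(T_2U)$ both lie in $pC_{p^2}$ and are distinct, so one of them is nonzero. To make room, first cancel the common part: write $T_1=WT_1'$ and $T_2=WT_2'$ with $T_1'$ and $T_2'$ disjoint. The difference of sums is unchanged, and we may replace $T_i$ by $T_i'$. Both $T_i'$ are then nonempty, since otherwise the other one already has sum in $pC_{p^2}\setminus\{0\}$ and length at most $p-1$.

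Next set $a=\max(|T_1'|,|T_2'|)\le p-1$. The sequence $S(T_1'T_2')^{-1}$ has length at least $2p-1-2a$, which may be too short to supply the completion. So instead I would choose $T_1$ and $T_2$ from Lemma \ref{Lemma 3.6} in a minimal way, and use Lemma \ref{Lemma 3.5} or Corollary \ref{Corollary 2.5} on the remaining terms of length at least $p-1$. The remaining sequence has multiplicity at most $p-1$, so Lemma \ref{Lemma 3.5} shows that its short subsums cover $C_p\setminus\{0\}$. The goal is a completion $U$ with $|U|\le p-a$.

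If the length bookkeeping fails, a fallback is available. Since $p\mid\sigma$ is forced via $\eta(C_p)=p$, one can run a Cauchy--Davenport count (Lemma \ref{Lemma 2.3}) on $\Sigma_{\le p}$ in $C_{p^2}$ restricted to the coset $pC_{p^2}$, and show that not all length-$\le p$ subsums in $\pi^{-1}(0)$ can vanish once the distinct images under $\pi$ are used. Closing this length bookkeeping is where I expect the real work to lie.
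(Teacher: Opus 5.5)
Your reductions (Proposition~\ref{Proposition 3.1}, Lemmas~\ref{Lemma 3.2}, \ref{Lemma 3.3}, \ref{Lemma 3.4}) and the case $\mathsf{h}(S)\ge p$ are fine. The main case, however, is not proved, as you yourself acknowledge.

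Applying Lemma~\ref{Lemma 3.6} to all of $S$ with $h=p-1$ produces $T_1,T_2$ of length up to $p-1$. A common completion would then have to have length at most $1$, and nothing guarantees such a completion exists.

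Cancelling the common part does not help. The disjoint pieces $T_1',T_2'$ can have total length as large as $2p-2$, which leaves essentially nothing. Even when the remainder $R$ has length $\ge p-1$, the only bound you have is $\mathsf{h}(R)\le p-1$. With that bound, Lemma~\ref{Lemma 3.5} yields completions of length $\le p-1$, not $\le p-a$.

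Choosing $T_1,T_2$ ``minimally'' does not fix this. You do not say what minimality would give, and Lemma~\ref{Lemma 3.6} with $h=p-1$ neither bounds $|T_i|$ below $p-1$ nor controls the multiplicity of what remains. The Cauchy--Davenport fallback is also not an argument. Lemma~\ref{Lemma 2.3} lives in $C_p$ and says nothing about which elements of $pC_{p^2}$ are attained.

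The missing idea is to split $S$ \emph{before} invoking Lemma~\ref{Lemma 3.6}, so that the two length budgets add up to exactly $p$. Here is how the paper does it.

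Write $S=g_1^{\alpha_1}\cdots g_s^{\alpha_s}$ with $p-1\ge\alpha_1\ge\cdots\ge\alpha_s$. Peel off the squarefree layers $S_j=\prod_{\alpha_i\ge j}g_i$, and let $k$ be the first index with $|S_1|+\cdots+|S_k|\ge p$. Since $|S_1|=s\le p-1$ and $|S_{\alpha_1}|\le p-1$, one gets $2\le k<\alpha_1$.

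Form $W$ of length exactly $p$ from part of $S_1$ together with all of $S_2,\dots,S_k$, so that $\mathsf{h}(W)=k$. Let $U=W^{-1}S$. Then $|U|=p-1$ and $\mathsf{h}(U)\le \alpha_1-k+1\le p-k$.

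Apply Lemma~\ref{Lemma 3.6} to $W$ with $h=k$. Either it finishes, or it gives $W_1,W_2\mid W$ of length $\le k$ with $\sigma(W_1)-\sigma(W_2)\in pC_{p^2}\setminus\{0\}$. If $\pi(\sigma(W_1))=0$ you are done directly. Otherwise, apply Lemma~\ref{Lemma 3.5} to $\pi(U)$ with $h=p-k$; this uses your observation that $\pi$ is injective on $\supp(S)$, so $\mathsf{h}(\pi(U))=\mathsf{h}(U)$. It gives $U'\mid U$ with $|U'|\le p-k$ and $\pi(\sigma(U'W_1))=0$.

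Now $U'W_1$ and $U'W_2$ both have length $\le p$, their sums lie in $pC_{p^2}$, and the sums are distinct. Hence one of them is nonzero, which finishes the proof. Balancing $k$ against $p-k$ is exactly the length bookkeeping your proposal leaves open.
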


\begin{proof}
    By Lemma \ref{Lemma 3.2} and Lemma \ref{Lemma 3.3}, it suffices to prove that every sequence
    $S\in \mathcal{F}(C_{p^2}^{\times})$ with $|S|=2p-1$ has a subsequence $T$ such that $ 1\le |T|\le p $ and $\sigma(T)\in pC_{p^2}\setminus\{0\}$.
    The case $p=2$ is trivial. So we may assume that $p$ is an odd prime. 

    If $\mathsf{h}(S)\ge p$, then $S$ has a subsequence $g^p$
    of length $p$ and sum $\sigma(g^p)=pg\in pC_{p^2}\setminus\{0\}$ as desired.
    If there are two elements $g', g''$ in $S$ satisfying $g'-g''\in pC_{p^2}\setminus \{0\}$, then the result follows from Lemma \ref{Lemma 3.4}.
    Therefore, we just need to deal with
    $$S=g_1^{\alpha_1}g_2^{\alpha_2}\cdots g_s^{\alpha_s} \in \mathcal{F}(C_{p^2}^{\times}),$$
    where $s=|\supp(S)|\in [3, p-1]$, $1\le \alpha_s \le \dots \le \alpha_1 \le p-1$, $|S|=\alpha_1+\dots+\alpha_s=2p-1$ and $pg_i\ne pg_j$ in $C_{p^2}$ for any $i\ne j$.

    Let $S_0=\emptyset$ be the empty sequence. For $j\in[1, \alpha_1]$, define
    \begin{equation*}
        S_j=\prod_{g\in \supp((\prod_{i=0}^{j-1}S_i)^{-1}S)}g
    \end{equation*}
    by induction on $j$.
    Then $S_j$ is squarefree for $j\in[1, \alpha_1]$ and $S=\prod_{j=1}^{\alpha_1}S_j$. Furthermore,
    $$g_1 \mid S_{\alpha_1} \mid S_{\alpha_1-1} \mid \cdots
    \mid S_2 \mid S_1 = g_1\cdot\ldots\cdot g_s$$
    and so $1\le |S_{\alpha_1}|\le \cdots \le |S_1|=s \le p-1$.
    Since $|S|=\sum_{j=1}^{\alpha_1}|S_j|=2p-1$, there is an integer $k \in [2, \alpha_1]$ such that
    \begin{equation*}
        \sum_{j=1}^{k-1}|S_j|< p \le \sum_{j=1}^{k}|S_j|.
    \end{equation*}
    It must hold that $k<\alpha_1$. Otherwise, $k = \alpha_1$ leads to
   \begin{equation*}
       \sum_{j=1}^{k-1}|S_j|=\sum_{j=1}^{\alpha_1-1}|S_j|
       =|S|-|S_{\alpha_1}| \ge (2p-1)-(p-1)=p,
   \end{equation*}
   a contradiction to $\sum_{j=1}^{k-1}|S_j|< p$.

   Set $m=\sum_{j=1}^{k}|S_j| - p$. Then $s-m=|S_1|-m=p-\sum_{j=2}^{k}|S_j| \ge 1$.
   Now we define
   \begin{equation*}
       W=\prod_{i=1}^{s-m}g_i \prod_{j=2}^{k}S_j.
   \end{equation*}
    Then $\mathsf{h}(W)=k$ and
    $$
    |W|=(s-m)+\sum_{j=2}^k|S_j|=\sum_{j=1}^k|S_j|-m=p.
    $$
    If there is a subsequence $T$ of $W$ such that $1\le |T| \le k$ and $\sigma(T)\in pC_{p^2}\setminus \{0\}$, then $T$ is as desired and we are done.
    If such a subsequence of $W$ does not exist, then by Lemma \ref{Lemma 3.6},
    there are two subsequences $W_1, W_2$ of $W$ such that $1\le | W_1| , | W_2| \le k$ and $\sigma(W_1)-\sigma(W_2) \in pC_{p^2}\setminus \{0\}$.

    Let $\pi:C_{p^2}\to C_{p^2}/pC_{p^2}\cong C_p$
    be the canonical homomorphism. If $\sigma(\pi(W_1))=\pi(0)=0+pC_{p^2}$,
    then $\sigma(W_1)\in pC_{p^2}$. Since
    $\sigma(W_1)-\sigma(W_2)\in pC_{p^2}\setminus\{0\},$
    at least one of $\sigma(W_1)$ and $\sigma(W_2)$ belongs to
    $pC_{p^2}\setminus\{0\}$, and we are done. Hence we may assume that
    $\sigma(\pi(W_1))\neq 0+pC_{p^2}$.

   Set
   \begin{equation*}
       U=W^{-1}S=\prod_{i=s-m+1}^{s}g_i\prod_{j=k+1}^{\alpha_1}S_i.
   \end{equation*}
   Then $|\pi(U)|=|U|=|S|-|W|=p-1$ and
   $\mathsf{h}(\pi(U))=\mathsf{h}(U)\le \alpha_1-k+1\le p-k$.
   It follows from Lemma \ref{Lemma 3.5} that
   $(C_{p^2}/pC_{p^2})^{\times} \subseteq \Sigma_{\le p-k}(\pi (U))$.
   Thus, there exists a subsequence $U'\mid U$ satisfying $1\le |U'| \le p-k$ and $\sigma(\pi(U'))=-\sigma(\pi(W_1))\in (C_{p^2}/pC_{p^2})^{\times}$.
   Hence, $\sigma(\pi(U'W_1))=0+pC_{p^2}\in C_{p^2}/pC_{p^2}$,
   or equivalently, $\sigma(U'W_1)\in ker(\pi)=pC_{p^2}$.

   Let $T_1=U'W_1,\ T_2=U'W_2$.
   Since $\sigma(W_1)-\sigma(W_2) \in pC_{p^2}\setminus \{0\}$, $S$ has two subsequences $T_1, T_2$ such that $1\le|T_1|, |T_2| \le p$ and $\sigma(T_1), \sigma(T_2)\in pC_{p^2}, \sigma(T_1)-\sigma(T_2) \in pC_{p^2}\setminus\{0\}$.
   Then either $T_1$ or $T_2$ has length in $[1, p]$ and sum in $pC_{p^2}\setminus \{0\}$, which is as desired.
\end{proof}

Lemma \ref{Lemma 3.7} provides the exact values of the four combinatorial constants  $\eta_{n}^{*}$, $\mathsf{D}_{n}^{*}$, $\eta_{n}^{\times}$ and $\mathsf{D}_n^{\times}$ when $n$ is prime. In the next
section, we extend this result to arbitrary integers $n$ by means of
inductive arguments and recursive decompositions.

\section{Proofs of Theorem \ref{Theorem 1.2} and Theorem \ref{Theorem 1.4} }

In this section, we give the proofs of Theorem \ref{Theorem 1.2} and Theorem \ref{Theorem 1.4}.
First, we prove Theorem \ref{Theorem 1.4}, or equivalently, $\eta_n^{\times}\le 2n-1$ for a prime power $n$.

\

\noindent {\bf  Proof of Theorem \ref{Theorem 1.4}. }
    Let $n=p^e$ with a positive integer $e$ and let $G=C_{p^{2e}}$. By Proposition \ref{Proposition 3.1}, we will prove that every sequence $S\in \mathcal{F}(G\setminus pG)$ with $|S|=2p^e-1$ has a subsequence $T$ with $ 1\le |T|\le p^e$ and $\sigma(T)\in p^eG\setminus p^{e+1}G.$

    We claim that, for every $j\in[0,e]$, $S$ has disjoint subsequences  $T_1^{(j)},T_2^{(j)},\ldots,T_{s_j}^{(j)}$ with $s_j=2p^{e-j}-1$, $1\le |T_i^{(j)}|\le p^j$ and $\sigma(T_i^{(j)})\in p^jG\setminus p^{j+1}G$ for $i\in[1,s_j]$.
    We will prove our claim by induction on $j$. Our claim for $j=0$ is clear by the hypothesis at the first paragraph.
    Assume that our claim holds for $j-1$ with $j\in[1,e]$.

    By the induction assumption,
    $S$ has disjoint subsequences  $T_1^{(j-1)},T_2^{(j-1)},\ldots,T_{s_{j-1}}^{(j-1)}$ with $s_{j-1}=2p^{e-j+1}-1$, $1\le |T_i^{(j-1)}|\le p^{j-1}$ and $\sigma(T_i^{(j-1)})\in p^{j-1}G\setminus p^{j}G$ for $i\in[1,s_{j-1}]$. Set
    $$
    S^{j-1}=\sigma(T_1^{(j-1)})\sigma(T_2^{(j-1)})\cdots \sigma(T_{s_{j-1}}^{(j-1)})
    \in \mathcal{F}(p^{j-1}G\setminus p^{j}G).
    $$
    By $\eta_p^{\times}=2p-1\le 2p^{e-j+1}-1=|S^{j-1}|$, we can choose disjoint subsequences $\prod _{i\in I_1}\sigma(T_i^{(j-1)}),\ldots,\prod _{i\in I_w}\sigma(T_i^{(j-1)})$ of $S^{j-1}$ with
    $$
    w\ge
    \left\lceil \frac{|S^{j-1}|-(2p-2)}{p}\right\rceil= \left\lceil \frac{2p^{e-j+1}-1-(2p-2)}{p}\right\rceil=2p^{e-j}-1=s_j
    $$
    such that $\bigcup_{i=1}^wI_i\subseteq[1,s_{j-1}]$, $|I_x|\in[1,p]$ and
    $$
    \sigma(\prod _{i\in I_x}\sigma(T_i^{(j-1)}))\in p^jG\setminus p^{j+1}G,
    $$
    for any $x\in[1,w]$.

    For any $x\in[1,w]$, let $T_x^{(j)}=\prod _{i\in I_x}T_i^{(j-1)}$. Then $1\le|T_x^{(j)}|=\sum_{i\in I_x}|T_i^{(j-1)}|\le p^j$ and
    $$
    \sigma(T_x^{(j)})=\sigma(\prod _{i\in I_x}\sigma(T_i^{(j-1)}))\in p^jG\setminus p^{j+1}G.
    $$
    Therefore our claim is proved for $j$.

    Theorem \ref{Theorem 1.4} follows from our claim for $j=e$.
    \hfill$\Box$

Next, we are going to prove Theorem \ref{Theorem 1.2} and so turn to $\mathsf{D}^*(n,nk)$, which is equal to $\eta^*(n,nk)$ by Lemma \ref{Lemma 3.2}.

\begin{lemma}\label{Lemma 4.1}
    Let $m,n,k \ge 2$ be integers.
    If $\mathsf{D}^*(n,nk)<\infty$ and $\mathsf{D}^*(m,mk)<\infty$,
    then we have
    $$
    \mathsf{D}^*(mn,mnk)\le
    n\bigl(\mathsf{D}^*(m,mk)-1\bigr)+\mathsf{D}^*(n,nk).
    $$
\end{lemma}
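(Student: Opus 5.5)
We work in $G=C_{mnk}$ and use Proposition \ref{Proposition 3.1}(1) throughout. It suffices to show that every $S\in\mathcal F(G\setminus mnG)$ of length
$$|S|=n\bigl(\mathsf D^*(m,mk)-1\bigr)+\mathsf D^*(n,nk)$$
has a non-empty subsequence $T$ with $\sigma(T)\in mnG\setminus\{0\}$. Write $D_m=\mathsf D^*(m,mk)$ and $D_n=\mathsf D^*(n,nk)$. The plan has two stages. First we group terms of $S$ into short blocks whose sums lie in $nG\setminus mnG$. Then we apply the invariant for $(m,mk)$ to the block sums, viewed inside $nG\cong C_{mk}$.

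\emph{Stage 1.} We greedily extract disjoint blocks $B_1,B_2,\dots$ from $S$, each of length at most $n$, with $\sigma(B_i)\in nG\setminus nkG$.

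A term of $S$ already lying in $nG$ is not in $mnG$, and we claim it is not in $nkG$ either. Indeed, $mnG$ and $nkG$ are the subgroups of $G$ of orders $k$ and $m$, and when $m\mid k$ we have $nkG\subseteq mnG$. In general, however, $nkG\not\subseteq mnG$, so this case needs care, which is why a more robust route is preferable (see below). For the remaining terms, pass to the cyclic group $G/nkG\cong C_{nk}$ and use Proposition \ref{Proposition 3.1}(1) with $|G|$ replaced by $nk$. By Lemma \ref{Lemma 3.2}(1), $\eta^*(n,nk)=D_n$. So as long as at least $D_n$ terms remain that are not in $nG$, we can extract a block $B$ with $1\le|B|\le n$ and $\sigma(B)\in nG\setminus nkG$.

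The more robust route is to run this extraction uniformly, applied directly to the whole remaining sequence in $G\setminus nG$. Terms lying in $nG\setminus mnG$ are then set aside as blocks of length one, and we handle them as in Stage 2.

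Now count blocks. Each block removes at most $n$ terms. After $D_m-1$ blocks, at most $n(D_m-1)$ terms have been used, so at least $D_n$ terms remain and one more block can be extracted. Hence we obtain at least $D_m$ blocks.

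\emph{Stage 2.} Each block sum $\sigma(B_i)$ lies in $nG$ and is nonzero. If some $\sigma(B_i)\in mnG$, then $B_i$ is already the desired $T$. Otherwise every $\sigma(B_i)$ lies in $nG\setminus mnG$. Identifying $nG\cong C_{mk}$ sends $mnG$ to $mC_{mk}$. The sequence $\sigma(B_1)\cdots\sigma(B_{D_m})$ is therefore a sequence over $C_{mk}\setminus mC_{mk}$ of length $D_m$. By Proposition \ref{Proposition 3.1}(1), applied with $(m,k)$ in place of $(n,k)$ and $|G|=mk$, there is a non-empty index set $I$ with $\sum_{i\in I}\sigma(B_i)\in mC_{mk}\setminus\{0\}$. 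This corresponds to an element of $mnG\setminus\{0\}$. Taking $T=\prod_{i\in I}B_i$ finishes the proof.

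The main obstacle is Stage 1: guaranteeing that each block sum lies in $nG$ and is nonzero in $G$. The condition "not in $nkG$" transferred from $C_{nk}$ must be interpreted correctly. I would first try to choose the lifts so that the relevant subgroup is $nkG$ inside $G$. The fallback is a length-at-most-$n$ zero-sum argument (as in the proof of Lemma \ref{Lemma 3.2}(1)), which removes blocks whose sum is $0$ without spoiling the count.
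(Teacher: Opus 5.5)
Your argument is correct and is essentially the paper's proof. The terms in $nG\setminus mnG$ are kept as they are, and the remaining terms are grouped, via $\eta^*(n,nk)=\mathsf{D}^*(n,nk)$ in $G/nkG\cong C_{nk}$, into disjoint blocks of length at most $n$ with sums in $nG\setminus nkG$. Then $\mathsf{D}^*(m,mk)$ is applied inside $nG\cong C_{mk}$. Your count ``each block uses at most $n$ terms'' simply replaces the paper's ceiling estimate for the number of blocks.

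The doubt in your final paragraph is unfounded: the canonical map $G\to G/nkG$ has kernel exactly $nkG$ and pulls $n(G/nkG)$ back to $nG+nkG=nG$, so no choice of lifts or zero-sum fallback is needed. Your tentative claim that terms of $nG\setminus mnG$ avoid $nkG$ is false in general (take $m=n=2$, $k=3$ and $6\in C_{12}$). It is never used, though, since those terms only need to lie in $nG\setminus mnG$, which already makes them nonzero.
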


\begin{proof}
    Let $G=C_{nmk}$ and $S\in\mathcal F(G \setminus nmG)$
    be a sequence of length
    \begin{equation}\label{4-4}
        |S|=n\bigl(\mathsf{D}^*(m,mk)-1\bigr)+\mathsf{D}^*(n,nk).
    \end{equation}
    We will prove that $S$ has a subsequence $T$ with $\sigma(T)\in nmG\setminus nmkG=nmG\setminus\{0\}$.

    Since $S\in \mathcal{F}(G \setminus nmG)$, we decompose $S$ as
    $$S = S_1 S_2,$$
    where every term in $S_1$ belongs to $G \setminus nG$ and every term in $S_2$ belongs to $nG \setminus mnG$.

    Let $\varphi:G\to G/nkG$ be the canonical homomorphism.
    Then
    \begin{equation*}
        \varphi(S_1)\in  \mathcal{F}( \varphi(G) \setminus \varphi(nG)),
    \end{equation*}
    where $\varphi(G)= G/nkG \cong C_{nk}$ and $\varphi(nG)= nG/nkG \cong C_{k}$.
    If $|S_1|<\mathsf{D}^*(n,nk)$, then set $r=0$.
    If $|S_1|\ge \mathsf{D}^*(n,nk)$, then by $\eta^*(n,nk)=\mathsf{D}^*(n,nk)$ and using the same method in the proof of Theorem \ref{Theorem 1.4},
    we can choose disjoint subsequences $T_1, \ldots, T_r$ of $S_1$ with
    \begin{equation}\label{4-5}
        r\ge\left\lceil \frac{| S_1|-(\mathsf{D}^*(n,nk)-1)}{n}\right\rceil
    \end{equation}
    such that $|T_i|\in[1, n]$ and
    \begin{equation}\label{4-6}
        \varphi(\sigma(T_i))=\sigma(\varphi(T_i))\in \varphi(nG) \setminus \varphi(nkG)
    \end{equation}
    for every $i\in[1,r]$.
    If there exists an index $i \in [1, r]$ satisfying $\sigma(T_i) \in nmG$, then it follows from (\ref{4-6}) that $\sigma(T_i) \in nmG \setminus \{0\}$
    and so $T_i$ is as desired.
    If not, then $\sigma(T_i) \in nG \setminus nmG$ for every $i\in[1,r]$.

    Now consider the sequence
    \begin{equation*}
        W=S_2 \cdot \prod_{i=1}^{r} \sigma(T_i)\in\mathcal{F}(nG\setminus mnG)
    \end{equation*}
    with $W=S_2$ if $r=0$.
    Using (\ref{4-4}) and (\ref{4-5}), we have
    \begin{equation*}
        \begin{aligned}
        |W|
        &= r+|S_2| \\
        &\ge \left\lceil \frac{|S_1|-(\mathsf{D}^*(n,nk)-1)}{n}\right\rceil + |S_2| \\
        &\ge \left\lceil \mathsf{D}^*(m,mk)-1+\frac{1-|S_2|}{n}\right\rceil+|S_2| \\
        &\ge \mathsf{D}^*(m,mk).
        \end{aligned}
    \end{equation*}
    By the definition of $\mathsf{D}^*(m,mk)$ and $W \in \mathcal{F}(nG \setminus mnG)$, there exists a non-empty subsequence
    $$
    W'=S_2' \cdot \prod_{i\in I}\sigma(T_i)
    $$
    of $W$ such that $S_2'\mid S_2$, $I\subseteq [1,r]$
    and $\sigma(W')\in mnG\setminus mknG$.

    Finally, let
    \begin{equation*}
         S_3=S_2' \cdot \prod_{i\in I}T_i .
    \end{equation*}
    Then $S_3\mid S$ and
    \begin{equation*}
         \sigma(S_3)=\sigma(S_2')+\sum_{i\in I} \sigma(T_i)
         =\sigma(W')\in nmG\setminus\{0\}.
    \end{equation*}
    Thus, $S_3$ is the required subsequence of $S$, and the proof is complete.
\end{proof}

Now, we can prove Theorem \ref{Theorem 1.2}.

\

\noindent {\bf Proof of Theorem \ref{Theorem 1.2}.}
{\rm (1)}
By the hypothesis $\rad(n) \nmid \rad(k)$,
there exists a prime $p$ such that $p \mid n$ and $p \nmid k$.
Let $nk=mp^v$ with $p \nmid m$ and $p^v \mid n$.

Consider the sequence $S$ consisting entirely of the integer $m$,
which is not divisible by $n$.
Then any subsequence $T$ of $S$ has sum $\sigma(T)=m|T|$.
If $\sigma(T)$ is divisible by $n$,
then it follows from $p^v \mid n$ and $p \nmid m$ that $p^v \mid |T|$,
which implies $nk=mp^v \mid m|T|=\sigma(T)$.
Therefore, no matter how long $S$ is, $S$ has no subsequence whose sum is divisible by $n$ but not by $nk$.

This yields $\eta^*(n,nk)=\mathsf{D}^*(n,nk)=\infty$.

{\rm (2)}
By Lemma \ref{Lemma 4.1}, if $\mathsf{D}^*(a,ak)\le 2a-1$ and $\mathsf{D}^*(b,bk)\le 2b-1$, then we have
\begin{equation*}
\begin{aligned}
\mathsf{D}^*(ab, abk)
&\le a\bigl(\mathsf{D}^*(b,bk)-1\bigr)+\mathsf{D}^*(a,ak) \\
&\le a(2b-2)+2a-1=2ab-1.
\end{aligned}
\end{equation*}
It follows from Definition \ref{Definition 1.1} and Lemma \ref{Lemma 3.7} that $\mathsf{D}^*(p,pk)\le \mathsf{D}^*(p,p^2)=2p-1$ for any prime $p \mid k$.
Since every prime divisor of $n$ is a prime divisor of $k$ by $1<\rad(n)\mid \rad(k)$, we get $\mathsf{D}^*(n,nk)\le 2n-1$ as desired.
\hfill$\Box$

\section{Inverse problems}
In this section, we study the inverse problems of Theorem \ref{Theorem 1.2} and Theorem \ref{Theorem 1.4}.
We first solve the inverse problem of Theorem \ref{Theorem 1.2}.

\begin{lemma}\label{Lemma 5.1}
    Let $n, k, m$ be positive integers with $m \mid k$, $\gcd(m, k/m)=1$ and $\rad(k/m)=\rad(n)>1$.
    For $G=C_{nk}$, if $S\in \mathcal{F}(G \setminus nG)$ of length $|S|=2n-2$
    and $S$ has no subsequence $T$ of length $|T|\in [1, n]$ satisfying $\sigma(T)\in nG\setminus\{0\}$,
    then $mg \in (mG)^{\times}$ for any $g \mid S$.
\end{lemma}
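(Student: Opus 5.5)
The condition $mg\in(mG)^{\times}$ is local at the primes of $n$. Since $\gcd(m,k/m)=1$ and $\rad(k/m)=\rad(n)$, no prime divisor of $n$ divides $m$. Hence $\ord(mg)=nk/m$ holds if and only if, for every prime $p\mid n$, the $p$-part of $\ord(g)$ equals the $p$-part of $nk$. In a cyclic group this is the same as $g\notin pG$ for every prime $p\mid n$. The plan is therefore to argue by contradiction: assume some term $g\mid S$ lies in $pG$ for a prime $p\mid n$, and produce a subsequence $T$ of $S$ with $|T|\in[1,n]$ and $\sigma(T)\in nG\setminus\{0\}$.

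The main tool is extremality. By Theorem \ref{Theorem 1.2}(2) and Lemma \ref{Lemma 3.2}, $\eta^*(n,nk)=\mathsf{D}^*(n,nk)=2n-1$, and $|S|=2n-2$. So for every $x\in G\setminus nG$, the sequence $Sx$ contains a subsequence $xT'$ with $T'\mid S$, $|T'|\le n-1$ and $\sigma(xT')\in nG\setminus\{0\}$. The term $x$ must occur in it, because $S$ alone has no such subsequence. The first step is to perturb $g$: put $z=\tfrac{nk}{p}\in nG$, which is nonzero of order $p$ (note $p\mid k$), and $x=g+z\notin nG$. Extremality gives $T'$ with $\sigma(T')+g+z\in nG\setminus\{0\}$.

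If $g\nmid T'$, then $gT'\mid S$ has length at most $n$ and sum in $nG$. By hypothesis that sum is $0$, which forces $\sigma(xT')=z$. Running the same argument with $x_t=g+tz$ for $t\in[1,p-1]$, we either get a contradiction directly or obtain, for every $t$, a subsequence $T'_t$ with $\sigma(T'_t)+g=0$. In particular there is a subsequence $gT'_t$ of $S$ with $|gT'_t|\le n$ and sum $0$. At this point I will use $g\in pG$. Writing $g=pg_0$, replacing one copy of $g$ by a suitable element of $g+\langle z\rangle$, and comparing with the assumption on $S$ should yield two sequences of length at most $n$ whose sums lie in $nG$ and differ by a nonzero element of $\langle z\rangle\subseteq nG$. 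That is exactly the situation of Lemma \ref{Lemma 3.4} and of the final step of Lemma \ref{Lemma 3.7}, and it gives a sum in $nG\setminus\{0\}$.

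The case $g\mid T'$ has to be handled separately. Here I would instead compare with $S$ after deleting one copy of $g$, or use that $\mathsf{v}_g(S)\le n-1$: otherwise $g^n$ already has sum $ng$, which is nonzero because $g\notin nG$ and $G$ is cyclic of order $nk$.

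I expect the main obstacle to be the precise use of $g\in pG$, namely showing that being divisible by $p$ forces a collision of sums modulo $nG$ that is nonzero inside $nG$. If the perturbation argument becomes messy, my fallback is to project onto the $p$-primary component $G_p\cong C_{p^{v}}$ (with $p^{v}\,\|\,nk$). There $S$ induces a sequence of the same shape, and I would run the multiplicity and Cauchy--Davenport arguments of Lemmas \ref{Lemma 3.5} and \ref{Lemma 3.6} on it to find two subsequences of length at most $n$ whose sums agree modulo $n$ but differ in the $p$-component.
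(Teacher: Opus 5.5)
Your first paragraph is correct and matches the paper. Since no prime divisor of $n$ divides $m$, it suffices to rule out a term $g\mid S$ with $g\in pG$ for some prime $p\mid n$.

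\textbf{The gap.} The proposal never supplies the step where $g\in pG$ produces a contradiction.

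\emph{The perturbation argument does not use $g\in pG$, and its endpoint is not contradictory.} Take the extremal sequence $S=h^{n-1}(-h)^{n-1}$ with $h\in G^{\times}$, and put $g=h$. Then $S\cdot(g+tz)$ has the good subsequence $(g+tz)(-h)$, with sum $tz\in nG\setminus\{0\}$. So $T'_t=-h$, $g\nmid T'_t$ and $\sigma(T'_t)+g=0$, which is exactly where your argument lands. Nothing up to that point separates the lemma's hypothesis from the extremal examples, where the conclusion holds.

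\emph{The proposed continuation does not work.} Replacing one copy of $g$ by an element of $g+\langle z\rangle$ yields sequences that are not subsequences of $S$. Two such sums differing by a nonzero element of $\langle z\rangle$ therefore say nothing about $S$. Lemma \ref{Lemma 3.4} needs two actual terms of $S$ whose difference lies in $nG\setminus\{0\}$. The case $g\mid T'$ is also left open.

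\emph{The fallback fails too.} Corollary \ref{Corollary 2.5} and Lemmas \ref{Lemma 3.5} and \ref{Lemma 3.6} require every term to have maximal order. That is precisely what fails for the image of $g$ in the $p$-primary component. Moreover, projecting to $G_p$ discards the other prime components, so you cannot certify there that two sums agree modulo $n$.

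\textbf{The missing idea.} It is the two-level counting of Lemma \ref{Lemma 4.1}, in which a term of $pG$ is worth a whole block.
\begin{enumerate}
\item Write $n=pw$; then $w\ge 2$ because $g\notin nG$.
\item Split $S=S_1S_2$ with $S_1$ over $G\setminus pG$ and $S_2$ over $pG\setminus nG$. Then $g\mid S_2$, so $|S_2|\ge 1$.
\item Apply $\eta^*(p,pwk)=2p-1$ repeatedly to extract from $S_1$ at least $\lceil(|S_1|-2p+2)/p\rceil$ disjoint blocks, each of length at most $p$ with sum in $pG\setminus\{0\}$.
\item If some block sum lies in $nG$, that block is already a subsequence of length at most $n$ with sum in $nG\setminus\{0\}$.
\item Otherwise, the block sums together with $S_2$ form a sequence $W$ over $pG\setminus nG$ of length at least $2w-2+|S_2|-\lfloor |S_2|/p\rfloor\ge 2w-1=\eta^*(w,wk)$.
\item A good subsequence of $W$ of length at most $w$ lifts to a subsequence of $S$ of length at most $pw=n$ with sum in $nG\setminus\{0\}$, a contradiction.
\end{enumerate}
The single term $g$ in $S_2$ supplies exactly the one unit of length by which $|S|=2n-2$ falls short of $2n-1$. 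This is the precise use of $g\in pG$ that your proposal is missing.
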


\begin{proof}
    Let $G=\langle e \rangle$ and $ue \mid S$ with $u\in [0, nk-1]$.
    The hypothesis $S\in \mathcal{F}(G \setminus nG)$ yields $n\nmid u$.

    Assume $mue \not\in (mG)^{\times}$.
    Then together with $mG=\langle me \rangle$,
    it follows that $\gcd(u, nk/m)=\gcd(u, |mG|)>1$.
    So $\gcd(u, \rad(n))=\gcd(u, \rad(nk/m))>1$ by the hypothesis $\rad(k/m)=\rad(n)$.
    Choose a prime divisor $p \mid \gcd(u,n)$ and let $n=pw$, where $w\ge 2$ by $n\nmid u$.

    Since $S\in \mathcal{F}(G \setminus nG)$, we can decompose $S$ as
    $$S = S_1 S_2,$$
    where $S_1\in \mathcal{F}(G \setminus pG)$ and $S_2\in \mathcal{F}(pG \setminus nG)$.
    It follows from $p \mid u$ that $ue\mid S_2$ and so $|S_2|\ge 1$.
    Using the same argument as in Lemma \ref {Lemma 4.1}, we have
    \begin{equation*}
        \begin{aligned}
        \left\lceil \frac{|S_1|-\eta^*(p, pwk)+1}{p}\right\rceil + |S_2|
        &=\left\lceil\frac{(2pw-2-|S_2|)-2p+2}{p}\right\rceil+|S_2|\\
        &=2w-2+\left\lceil \frac{-|S_2|}{p}\right\rceil+|S_2|\\
        &\ge 2w-1 =\eta^*(w,wk),
        \end{aligned}
    \end{equation*}
    and so $S$ has a subsequence $T$ of length $|T|\in [1, n]$
    with $\sigma(T)\in wpG\setminus wkpG=nG\setminus\{0\}$,
    a contradiction to the hypothesis of Lemma \ref {Lemma 5.1}.
\end{proof}

\begin{lemma}\label{Lemma 5.3}
    Let $S\in \mathcal{F}(C_n^{\times})$ with $n\ge 3$
    be a sequence of length $|S|=\ell \in [1, n-2]$.
    If $|\Sigma(0S)|=\ell+1$, then $\supp(S)\subseteq \{g,-g\}$ for some $g\in C_n^{\times}$.
\end{lemma}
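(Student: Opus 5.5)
We identify $\Sigma(0S)$ with an iterated sumset and then exploit equality in Corollary \ref{Corollary 2.5} step by step. Write $S=g_1g_2\cdots g_\ell$ with $g_i\in C_n^{\times}$, and put $A_i=\{0,g_i\}$ for $i\in[1,\ell]$. For $j\in[0,\ell]$ let
$$B_j=A_1+\cdots+A_j, \qquad B_0=\{0\}.$$
By definition $\Sigma(0S)=\{0\}\cup\Sigma(S)=B_\ell$.

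\textbf{Step 1: every partial sumset grows by exactly one.} Each $A_i$ satisfies the hypothesis of Lemma \ref{Lemma 2.4}, so
$$|B_j|=|B_{j-1}+A_j|\ge \min\{n,|B_{j-1}|+1\}.$$
Since $|B_0|=1$ and $|B_\ell|=\ell+1\le n-1<n$, the minimum is never $n$. Hence every step increases the size by at least one, and equality at the end forces $|B_j|=j+1$ for all $j$. In particular, for $j\ge 2$,
$$|B_{j-1}+\{0,g_j\}|=|B_{j-1}|+|\{0,g_j\}|-1,$$
where $|B_{j-1}|=j\ge 2$ and $|B_{j-1}|\le \ell\le n-2$.

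\textbf{Step 2: each $B_{j-1}$ is an arithmetic progression with difference $g_1$.} We argue by induction on $j$. The base case is $B_1=\{0,g_1\}$. The set $\{0,g_j\}$ is an arithmetic progression with difference $g_j\in C_n^{\times}$, and both sets have size at least $2$. So Lemma \ref{Lemma 2.6}, applied with $A=\{0,g_j\}$ and $B=B_{j-1}$, shows that $B_{j-1}$ is an arithmetic progression with difference $g_j$. The induction hypothesis says $B_{j-1}$ is also an arithmetic progression with difference $g_1$.

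\textbf{Step 3: compare the two differences.} To conclude $g_j=\pm g_1$, it suffices to prove the following claim: if $P\subseteq C_n$ is an arithmetic progression of length $L\in[2,n-2]$ with difference $d\in C_n^{\times}$, and $e\in C_n^{\times}$ satisfies $|P+\{0,e\}|=L+1$, then $e=\pm d$.

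This claim applies because $P=B_{j-1}$ and $e=g_j$ satisfy $|P+\{0,e\}|=L+1$ by Step 1. Then $B_j=B_{j-1}+\{0,\pm g_1\}$ is again an arithmetic progression with difference $g_1$, which closes the induction. It also yields $g_j\in\{g_1,-g_1\}$ for every $j\ge 2$, so $\supp(S)\subseteq\{g_1,-g_1\}$. The case $\ell=1$ is trivial.

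\textbf{Proof of the claim.} Multiply by $d^{-1}$ and translate, so that $P=\{0,1,\dots,L-1\}$ in $\mathbb{Z}/n\mathbb{Z}$. Represent $e$ by an integer with $1\le |e|\le n/2$. The condition $|P+\{0,e\}|=L+1$ is equivalent to $|P\cap(P+e)|=L-1$. Counting the overlap of two cyclic intervals gives
$$|P\cap (P+e)|=\max\{0,L-|e|\}+\max\{0,L-(n-|e|)\}.$$
Suppose $|e|\ge 2$.
\begin{itemize}
\item If $|e|\le L$, the overlap equals $L-1$ only when $L-n+|e|=|e|-1$, that is, $L=n-1$. This is excluded by $L\le n-2$.
\item If $|e|>L$, the first term vanishes, so we would need $L-(n-|e|)=L-1$, that is, $|e|=n-1$. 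This contradicts $|e|\le n/2$ together with $n\ge 3$.
\end{itemize}
Hence $|e|=1$, i.e. $e=\pm d$.

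\textbf{Expected main obstacle.} The delicate point is this overlap count. Wrap-around in $C_n$ is exactly where the hypotheses $\ell\le n-2$ and $n\ge 3$ are used. A secondary point is that Lemma \ref{Lemma 2.6} needs both sets to have size at least $2$, which is why the induction starts at $j=2$ rather than $j=1$.
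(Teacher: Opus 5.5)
Your proof is correct, and its skeleton matches the paper's: the chain $B_j=B_{j-1}+\{0,g_j\}$, Lemma~\ref{Lemma 2.4} to force $|B_j|=j+1$, and induction on the statement that $B_{j-1}$ is an arithmetic progression with difference $g_1$. The two proofs differ only in the decisive step.

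The paper applies Lemma~\ref{Lemma 2.6} with the roles opposite to yours. It takes $A=B_{j-1}$, the progression with difference $g_1$, and $B=\{0,g_j\}$. It then reads off that the two-element set $\{0,g_j\}$ is a progression with difference $g_1$, i.e.\ $g_j=\pm g_1$. You instead prove this special case by hand via the overlap count $|P\cap(P+e)|=\max\{0,L-|e|\}+\max\{0,L-n+|e|\}$, and that count and its case analysis are correct.

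Your Step~2 is redundant. Its conclusion, that $B_{j-1}$ is a progression with difference $g_j$, is never used in Step~3. The claim there needs only the induction hypothesis and the size condition from Step~1. So your argument actually relies on nothing beyond Lemma~\ref{Lemma 2.4} and elementary counting.

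What each route buys: the paper's is a one-line finish given the cited lemma. Yours is self-contained and shows exactly where $\ell\le n-2$ and $n\ge 3$ enter, through the wrap-around in $C_n$. This is more than cosmetic. As recorded here, Lemma~\ref{Lemma 2.6} tacitly needs $|A+B|<n$. For example, in $C_5$ the set $A=C_5\setminus\{0\}$ is a progression with difference $1$, and $B=\{0,2\}$ gives $|A+B|=5=|A|+|B|-1$, yet $B$ is not a progression with difference $1$. The paper's application is safe because $|A_j|\le \ell+1\le n-1$. Your claim builds that condition in explicitly through $L\le n-2$.
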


\begin{proof}
    Let $S=g_1 \cdots  g_{\ell}$, $S_j=g_1\cdots g_j$ and
    $A_j=\Sigma(0S_j)=\{0\}\cup \Sigma(S_j)$ for every $j\in[1,\ell]$.
    Then for any $j\in[2,\ell]$, we have
    $$
    A_j=A_{j-1}+\{0,g_j\}.
    $$
    Since $g_j\in C_n^{\times}$ and $|A_j|\le |A_\ell|=|\Sigma(0S)|=\ell+1<n$ by $\ell \in [1, n-2]$,
    Lemma \ref{Lemma 2.4} implies that
    $$
    |A_j|\ge \min\{n,\ |A_{j-1}|+2-1\}=|A_{j-1}|+1.
    $$
    Then $2=|A_1|< \cdots <|A_\ell|=\ell+1$ and so $|A_j|=j+1$.

    Let $g=g_1\in C_n^{\times}$ and suppose that $S_{j-1} \in \mathcal{F}(\{g,-g\})$ for some $j \in [2, \ell]$.
    Then $A_{j-1}=\Sigma(0S_{j-1})$ is an arithmetic progression in $G$ with common difference $g\in C_n^{\times}$ and cardinality $|A_{j-1}|=j$.
    Since
    $$
    |A_{j-1}+\{0,g_j\}|=|A_j|=j+1=|A_{j-1}|+|\{0,g_j\}|-1,
    $$
    Lemma \ref{Lemma 2.6} yields that $\{0,g_j\}$ is an arithmetic progression with the same common difference $g$, or equivalently, $g_j \in \{g,-g\}$.
    Therefore, by induction on $j$, we get $S=S_{\ell} \in \mathcal{F}(\{g,-g\})$.
\end{proof}

\begin{lemma}\label{Lemma 5.4}
    Let $n, k$ be positive integers with $n\ge 3$ and $\rad(k)=\rad(n)$.
    For $G=C_{nk}$, if $S\in \mathcal{F}(G \setminus nG)$ of length $|S|=2n-2$
    and $S$ has no subsequence $T$ of length $|T|\in [1, n]$ satisfying $\sigma(T)\in nG\setminus\{0\}$,
    then
    $$
    S=g^{n-1}(-g)^{n-1}
    $$
    for some $g \in G^{\times}$.
\end{lemma}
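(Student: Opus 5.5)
The first step is to reduce to generators of $G$. Applying Lemma \ref{Lemma 5.1} with $m=1$ (the hypotheses $m\mid k$, $\gcd(1,k)=1$ and $\rad(k)=\rad(n)>1$ hold) gives $g\in G^{\times}$ for every $g\mid S$. Hence every term generates $G=C_{nk}$.

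Next we record two consequences of the extremal hypothesis. First, $\mathsf{h}(S)\le n-1$: otherwise some $g^n\mid S$, and $\sigma(g^n)=ng\in nG\setminus\{0\}$ because $\ord(g)=nk>n$. Second, by Lemma \ref{Lemma 3.4} (note $|S|=2n-2\ge n+1$ as $n\ge 3$), no two terms of $S$ differ by an element of $nG\setminus\{0\}$. So if $\pi:G\to G/nG\cong C_n$ is the canonical map, two terms of $S$ with the same image under $\pi$ are equal. Thus $\pi(S)\in\mathcal F(C_n^{\times})$, $\mathsf h(\pi(S))=\mathsf h(S)\le n-1$, and $\pi$ is injective on $\supp(S)$. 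The general principle we use repeatedly is this: any $W\mid S$ with $1\le|W|\le n$ and $\pi(\sigma(W))=0$ must have $\sigma(W)=0$. Consequently, if $W_1,W_2$ are subsequences with $\pi(\sigma(W_1))=\pi(\sigma(W_2))$ but $\sigma(W_1)\ne\sigma(W_2)$, and both can be completed by a common disjoint $U$ with $\pi(\sigma(UW_i))=0$ and $|UW_i|\le n$, we get a contradiction exactly as in the last step of Lemma \ref{Lemma 3.7}. The completions $U$ will be supplied by Lemma \ref{Lemma 3.5} applied to $\pi$ of a complementary piece of length at least $n-1$, with a multiplicity bound matched to the allowed length.

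The core step is to show $\supp(\pi(S))\subseteq\{\bar g,-\bar g\}$ for some $\bar g\in C_n^{\times}$, via Lemma \ref{Lemma 5.3}. First we split $S=RC$ with $|R|=\ell\le n-2$, choosing $R$ squarefree with $|\supp(R)|$ as large as possible, and $C$ long enough with $\mathsf h(C)$ small enough that Lemma \ref{Lemma 3.5} gives $C_n\setminus\{0\}\subseteq\Sigma_{\le n-\ell}(\pi(C))$. For instance, we would build $R$ greedily from the squarefree layers $S_1,S_2,\dots$ as in the proof of Lemma \ref{Lemma 3.7}. Then we show $|\Sigma(0\pi(R))|=\ell+1$. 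The lower bound is Lemma \ref{Lemma 2.4}. For the upper bound, we compare the $G$-sums of subsequences of $R$ with their images. Using the completion principle above, distinct subsequence sums of $0R$ in $G$ that agree modulo $n$ are impossible, and a counting argument via Corollary \ref{Corollary 2.5} shows that an extra element of $\Sigma(0\pi(R))$ would produce a zero-sum (mod $n$) subsequence of length at most $n$ with nonzero sum. Lemma \ref{Lemma 5.3} then gives $\supp(\pi(R))\subseteq\{\bar g,-\bar g\}$. Repeating this with $R$ chosen to contain any prescribed term of $S$ (exchanging terms between $R$ and $C$) extends the conclusion to all of $\supp(\pi(S))$. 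I expect this step to be the main obstacle: the bookkeeping of lengths, so that every completed subsequence has length at most $n$, must be done carefully, and the choice of $R$ may need to be adjusted when $\mathsf h(S)$ is large.

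Finally, we lift back to $G$. By injectivity of $\pi$ on $\supp(S)$, $S=g^{a}y^{b}$ with $\pi(g)=\bar g$, $\pi(y)=-\bar g$ and $a+b=2n-2$. Since $\mathsf h(S)\le n-1$, we get $a=b=n-1$ (both are then positive). The length-$2$ subsequence $gy$ has $\pi(\sigma(gy))=0$ and $2\le n$, so $\sigma(gy)=0$, i.e.\ $y=-g$. Thus $S=g^{n-1}(-g)^{n-1}$ with $g\in G^{\times}$.
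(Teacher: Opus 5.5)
There is a genuine gap in your core step. Your completion principle only shows that, on subsequences of $0R$, the $G$-sum is determined by its image under $\pi$. That gives $|\Sigma(0R)|=|\Sigma(0\pi(R))|$, which is an identity between two unknowns, not the upper bound $|\Sigma(0\pi(R))|\le \ell+1$ that Lemma~\ref{Lemma 5.3} needs in $C_n$ with $\ell\le n-2$. The ``counting argument via Corollary~\ref{Corollary 2.5}'' meant to supply this bound is never specified. The natural way to run it is to enlarge $R$ to a piece of length $n-1$ and apply pigeonhole to more than $n$ sums. But then you must complete subsequences of length up to $n-1$ by further terms, so the completed sequences can have length about $2n-2$, and your length-restricted principle ($|UW_i|\le n$) no longer applies. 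The same length restriction forces the condition $\mathsf h(\pi(C))\le n-\ell$. With your squarefree choice of $R$ this fails a priori as soon as $\mathsf h(S)$ is close to $n-1$ and $S$ has three or more distinct terms, as you yourself suspect. So the proposal, as written, does not close.

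Two ideas from the paper remove both problems.

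\emph{No length bookkeeping.} The descent argument in the proof of Lemma~\ref{Lemma 3.2}{\rm (1)} repeatedly strips off a subsequence of length at most $n$ with sum in $nG$, using $\eta(C_n)=n$. It shows that the hypothesis already implies $S$ has \emph{no} subsequence of any length with sum in $nG\setminus\{0\}$.

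\emph{Count in $G$, not in $C_n$.} Fix $g\mid S$ and let $S_1=g\,g_1\cdots g_{n-2}$ for an arbitrary choice of $n-2$ further terms, with $S_2=S_1^{-1}S$; both have length $n-1$.
\begin{itemize}
\item Lemma~\ref{Lemma 3.5} gives $\Sigma(\pi(0S_1))=\Sigma(\pi(0S_2))=G/nG$.
\item Hence every subsequence of $0S_1$ can be completed by one of $0S_2$ to have sum in $nG$, and so, by the strengthened hypothesis, sum exactly $0$.
\item Therefore two subsequences of $0S_1$ with equal images under $\pi$ have equal sums in $G$. Since $\Sigma(\pi(0S_1))$ is all of $G/nG$, this yields $|\Sigma(0S_1)|=n=|S_1|+1$.
\end{itemize}
The trivial bound $|C_n|=n$ is exactly the upper bound you were missing. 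Because $|S_1|=n-1\le nk-2$, Lemma~\ref{Lemma 5.3} applies in $C_{nk}$ itself and gives $\supp(S_1)\subseteq\{g,-g\}$ directly in $G$. Letting $g_1\cdots g_{n-2}$ vary covers all of $\supp(S)$. Then $\mathsf h(S)\le n-1$ forces $S=g^{n-1}(-g)^{n-1}$, and no separate lifting step is needed.

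The rest of your proposal is fine: the reduction to $G^{\times}$, the bound $\mathsf h(S)\le n-1$, and the injectivity of $\pi$ on $\supp(S)$ are all correct.
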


\begin{proof}
By Lemma \ref{Lemma 5.1} and $\rad(k)=\rad(n)$,
we have $S\in\mathcal{F}(G^{\times})$.
It is clear that $\mathsf{h}(S)\le n-1$.

Let $\pi:G\rightarrow G/nG\cong C_n$ be the canonical homomorphism.
Since $|S|=2n-2\ge n+1$ and $S$ has no subsequence $T$ of length $|T|\in [1, n]$ satisfying $\sigma(T)\in nG\setminus\{0\}$, Lemma \ref{Lemma 3.4} implies that  $x=y$ if $\pi(x)=\pi(y)$ and $xy\mid S$.
Thus $\mathsf{h}(\pi(S))=\mathsf{h}(S)\le n-1$.

Fix an element $g\mid S$. For arbitrary $g_1 \cdots g_{n-2} \mid g^{-1}S$,
let $S_1=gg_1 \cdots g_{n-2}$ and $S_2=S_1^{-1}S$.
Since $\pi(S_i)\in \mathcal{F}((G/nG)^{\times})$, $|\pi(S_i)|=n-1$
and $\mathsf{h}(\pi(S_i))\le \mathsf{h}(\pi(S))\le n-1$ for each $i\in [1, 2]$,
it follows from Lemma \ref{Lemma 3.5} that
\begin{equation}\label{5-1}
\Sigma(\pi(0S_1))=\Sigma(\pi(0S_2))=G/nG.
\end{equation}

Let $U$ and $V$ be any two subsequences of $0S_1$ with $\sigma(\pi(U))=\sigma(\pi(V))$.
By (\ref{5-1}), there exists a non-empty subsequence $W\mid 0S_2$ such that $\sigma(\pi(W))=-\sigma(\pi(U)).$
Together with $\sigma(\pi(U))=\sigma(\pi(V)),$
we have $\sigma(\pi(UW))=\sigma(\pi(VW))=\pi(0).$
By the hypothesis of Lemma \ref{Lemma 5.4}
and the same argument as in Lemma \ref{Lemma 3.2}{\rm (1)},
$S$ has no subsequence $T$ satisfying $\sigma(T)\in nG\setminus\{0\}$.
It follows that $\sigma(UW)=\sigma(VW)=0$ as well as $\sigma(U)=\sigma(V)$, whether $0\mid R$ for $R\in\{U,V,W\}$ or not.
That is, the value of $\sigma(T)$ with $T\mid S_1$ is uniquely determined by $\sigma(\pi(T))$.
Therefore,
\begin{equation}\label{5-2}
|\Sigma(0S_1)|=|\Sigma(\pi(0S_1))|=n
\end{equation}
by (\ref{5-1}).

Since $S_1\in\mathcal{F}(C_{nk}^{\times})$ of length $|S_1|=n-1<nk-2$ and (\ref{5-2}), Lemma \ref{Lemma 5.3} yields $\supp(S_1)\subseteq\{g,-g\}$.
By the arbitrariness of $g_1 \cdots g_{n-2} \mid g^{-1}S$,
we get $\supp(S)\subseteq\{g,-g\}$.
Together with $\mathsf{h}(S)\le n-1$ and $|S|=2n-2$, it forces that
$$
S=g^{n-1}(-g)^{n-1}.
$$
\end{proof}

\begin{lemma}\label{Lemma 5.5}
    Let $n, k, m$ be positive integers with $m \mid k$, $\gcd(m, k/m)=1$ and $\rad(k/m)=\rad(n)>1$. Let $G=C_{nk}$ and $S\in \mathcal{F}(G \setminus nG)$ of length $|S|=2n-2$.
    If $S$ has no subsequence $T$ of length $|T|\in [1, n]$ satisfying $\sigma(T)\in nG\setminus\{0\}$,
    then
    $$
    S=g^{n-1}(-g)^{n-1},
    $$
    where $g=g_1+g_2$ for some $g_1\in\frac{nk}{m}G$ and $g_2 \in (mG)^{\times}$.
\end{lemma}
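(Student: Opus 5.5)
We plan to reduce Lemma \ref{Lemma 5.5} to Lemma \ref{Lemma 5.4} through the Chinese remainder decomposition of $G=C_{nk}$. Since $\gcd(m,k/m)=1$ and $\rad(k/m)=\rad(n)$, we have $\gcd(m,n)=1$ and hence $\gcd(m, nk/m)=1$. Therefore
\[
G=\tfrac{nk}{m}G\oplus mG,
\]
with $\tfrac{nk}{m}G\cong C_m$ and $mG\cong C_{nk/m}$. Let $\psi:G\to mG$ denote the projection onto the second summand along $\frac{nk}{m}G$. Equivalently, $\psi$ is the map $G\to G/\frac{nk}{m}G\cong C_{nk/m}$ followed by the identification with $mG$. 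The key observations are $nG=\tfrac{nk}{m}G\oplus n(mG)$ and $\ker\psi=\tfrac{nk}{m}G\subseteq nG$. Moreover, $nG\setminus\{0\}$ versus $0$ is governed by both components.

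The first step is to transfer the hypothesis to $H:=mG\cong C_{n(k/m)}$, with $\rad(k/m)=\rad(n)$. For a subsequence $T\mid S$ with $\sigma(T)\in nG$, we have $\sigma(T)=0$ if and only if both components vanish. Here is the plan. Every $g\mid S$ satisfies $\psi(g)=mg'$-type element lying in $H^{\times}$ by Lemma \ref{Lemma 5.1} (indeed $mg\in(mG)^\times$, and multiplication by $m$ is an automorphism of $mG$ since $\gcd(m,|mG|)=1$, so $\psi(g)\in H^\times$). In particular $\psi(S)\in\mathcal F(H\setminus nH)$. Next we show that $\psi(S)$ has no subsequence $T'$ with $1\le|T'|\le n$ and $\sigma(T')\in nH\setminus\{0\}$. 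If such a $T'=\psi(T)$ existed, then $\sigma(T)\in nG$ because $\ker\psi\subseteq nG$, and $\sigma(T)\neq0$ because $\psi(\sigma(T))\ne0$. This contradicts the hypothesis. Applying Lemma \ref{Lemma 5.4} to $\psi(S)$ in $H\cong C_{n(k/m)}$ (for $n\ge3$; the case $n=2$ is handled by hand, where $|S|=2$ and the condition forces $\psi(S)=h(-h)$ since $\psi(g_1)+\psi(g_2)\in 2H$ must be $0$) gives $\psi(S)=h^{n-1}(-h)^{n-1}$ with $h\in H^\times$.

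The second step is to control the $\frac{nk}{m}G$-components. We write each term of $S$ as $x_i+h$ or $y_j-h$ with $x_i,y_j\in\frac{nk}{m}G$. Then for $i\neq i'$, the difference $(x_i+h)-(x_{i'}+h)=x_i-x_{i'}$ lies in $nG$. By Lemma \ref{Lemma 3.4}, applicable since $|S|=2n-2\ge n+1$ when $n\ge3$ and the terms lie in a suitable set, it must be $0$. Lemma \ref{Lemma 3.4} is stated for $C_{nk}^\times$, but its proof only uses that the $\pi$-images lie in $(G/nG)^\times$, which holds here since $\psi(g)\in H^\times$. Hence all $x_i$ equal a common $x$, and similarly all $y_j$ equal a common $y$. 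Finally, the pair $T=(x+h)(y-h)$ has length $2\le n$ and sum $x+y\in\frac{nk}{m}G\subseteq nG$, so $x+y=0$. Setting $g_1=x$ and $g_2=h$ gives $S=g^{n-1}(-g)^{n-1}$ with $g=g_1+g_2$, as claimed.

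The main obstacle I expect is bookkeeping in the first step. We need to verify carefully that $\psi(g)$ lands in $H^\times$ (not merely that $mg\in(mG)^\times$) and that Lemma \ref{Lemma 5.4}'s hypothesis "$\rad(k)=\rad(n)$" matches with $k$ replaced by $k/m$. Justifying the use of Lemma \ref{Lemma 3.4} outside $C_{nk}^\times$ is also delicate. For the latter, I would simply reprove the short pigeonhole/Cauchy--Davenport argument of Lemma \ref{Lemma 3.4} inline via Corollary \ref{Corollary 2.5} in $G/nG$.
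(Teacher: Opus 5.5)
Your proposal is correct and follows the paper's proof essentially step for step. It uses the same splitting $G=\frac{nk}{m}G\oplus mG$, transfers the problem to $mG$ (the paper uses $x\mapsto mx$, which differs from your projection $\psi$ only by an automorphism of $mG$), applies Lemma \ref{Lemma 5.4} there, and lifts back. The one place you work harder than needed is Lemma \ref{Lemma 3.4}: the paper applies your final length-two argument to every pair $(x_i+h)(y_j-h)$, obtaining $x_i+y_j=0$ for all $i,j$; this forces all $x_i$ equal and all $y_j$ equal at once, so the extension of Lemma \ref{Lemma 3.4} beyond $C_{nk}^{\times}$ is never needed.
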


\begin{proof}
The case $n=2$ is trivial and we assume that $n\ge 3$. Let $G=\langle e_0 \rangle$. Since $\gcd(m, \frac{nk}{m})=1$, we have $G=G_1 \oplus G_2$,
where $G_1=\frac{nk}{m}G=\langle e_1 \rangle \cong C_{m}$ with $e_1=\frac{nk}{m}e_0$
and $G_2=mG=\langle e_2 \rangle \cong C_{nk/m}$ with $e_2=me_0$.

Define $\varphi(x)=mx$ for any $x\in G$.
Then by $S\in \mathcal{F}(G \setminus nG)$ and Lemma \ref{Lemma 5.1}, we have
$$
\varphi(S) \in \mathcal{F}(G_2^{\times} \setminus nG_2).
$$
Moreover, the hypothesis of Lemma \ref{Lemma 5.5} yields $\gcd(m, n)=1$,
and thus $\varphi(S)$ has no subsequence $\varphi(T')$ of length $|\varphi(T')|\in [1, n]$ satisfying $\sigma(\varphi(T'))\in mnG\setminus\{0\}=nG_2\setminus\{0\}$.
Together with $\rad(k/m)=\rad(n)$ and $|\varphi(S)|=|S|=2n-2$,
Lemma \ref{Lemma 5.4} implies that
$$
\varphi(S)=(g_3)^{n-1}(-g_3)^{n-1}
$$
for some $g_3\in G_2^{\times}$.

For any $g\in G$, $g$ has a unique expression as $g=xe_1+ye_2$
with integers $x\in [0, m-1]$ and $y\in [0, nk/m-1]$.
If $\varphi(g)=mye_2=g_3$, then it follows from $e_2,g_3\in G_2^{\times}$ that there is a unique $g_2\in G_2^{\times}$ satisfying $ye_2=g_2$. If $\varphi(g)=-g_3$, then $\varphi(-g)=g_3$ and so $-ye_2=g_2$.
Therefore,
$$
S=\prod_{i=1}^{n-1}(a_ie_1+g_2)\prod_{i=1}^{n-1}(b_ie_1-g_2)
$$
with $a_i, b_i\in [0, m-1]$ for $i\in [1, n-1]$.

Consider the subsequence $R=(a_ie_1+g_2)(b_je_1-g_2) \mid S$
with $|R|=2<n$ and $\sigma(R)=(a_i+b_j)e_1\in nG$.
By the hypothesis of Lemma \ref{Lemma 5.5},
it must hold that $\sigma(R)=(a_i+b_j)e_1=0$.
Therefore, $a_i+b_j\equiv 0 \pmod{m}$ for any $i, j\in [1, n-1]$. By the arbitrariness of $a_i$ and $b_j$, the sequence $S$ must be of the form
$$
S=(g_1+g_2)^{n-1}(-g_1-g_2)^{n-1},
$$
where $g_1\in G_1=\frac{nk}{m}G$.
\end{proof}

Now we can prove Theorem \ref{Theorem 1.3}.

\

\noindent {\bf Proof of Theorem \ref{Theorem 1.3}.}
Let
$$
\mathbb{Z} \xrightarrow{\pi} \mathbb{Z}/nk\mathbb{Z},
$$
where $\pi$ is the canonical homomorphism.
Using Lemma \ref{Lemma 5.5} for $G=(\mathbb{Z}/nk\mathbb{Z}, +)\cong C_{nk}$, we have
$$
\pi(S)=(g_1+g_2)^{n-1}(-g_1-g_2)^{n-1}
\in \mathcal{F}(\mathbb{Z}/nk\mathbb{Z}),
$$
where $g_1\in\frac{nk}{m}G=\frac{nk}{m}\mathbb{Z}/nk\mathbb{Z}$ and $g_2 \in (mG)^{\times}=(m\mathbb{Z}/nk\mathbb{Z})^{\times}$.
Therefore, there exist integers $c,d$ with $c$ coprime to $\frac{nk}{m}$, as well as $n$ by $\rad(k/m)=\rad(n)$, such that $n-1$ terms of $S$ are congruent to $d\frac{nk}{m}+cm$ modulo $nk$ and the other $n-1$ terms of $S$ are congruent to $-d\frac{nk}{m}-cm$ modulo $nk$.

It is clear that such a sequence $S$ has no subsequence whose sum is divisible by $n$ but not by $nk$.
\hfill$\Box$

Next, we turn to the inverse problem of Theorem \ref{Theorem 1.5}.

\begin{lemma}\label{Lemma 5.6}
     Let $G=C_{p^{e+1}}$ with $p$ prime and $e\ge 1$,
     and $S\in \mathcal{F}(G^{\times})$ of length $|S|=2p^e-2$.
     If $S$ has no subsequence $T$ of length $|T|\in [1, p^{e}]$ satisfying $\sigma(T)\in (p^e G)^{\times}$, then
     $$
     S=g^{p^e-1}(-g)^{p^e-1},
     $$
     for some $g\in G^{\times}$.
\end{lemma}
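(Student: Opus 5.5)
The proof will follow the template of Lemma \ref{Lemma 5.4}, with the target set $nG\setminus\{0\}$ replaced by $(p^eG)^{\times}$. Write $n=p^e$ and $G=C_{p^{e+1}}$, so that $nG=p^eG\cong C_p$ and $(p^eG)^{\times}=p^eG\setminus\{0\}$. The condition $\sigma(T)\in(p^eG)^{\times}$ is therefore exactly $\sigma(T)\in nG\setminus\{0\}$. Consequently, the hypothesis of Lemma \ref{Lemma 5.6} is the hypothesis of Lemma \ref{Lemma 5.4} in the special case $k=p$. Here $\rad(k)=\rad(n)=p$, and $S\in\mathcal F(G^{\times})\subseteq\mathcal F(G\setminus nG)$, since an element of order $p^{e+1}$ is not a multiple of $p^e$ when $e\ge1$.

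If $n\ge3$, Lemma \ref{Lemma 5.4} applies directly and gives $S=g^{n-1}(-g)^{n-1}$ with $g\in G^{\times}$, which is the claim. It remains to handle $n\le 2$, which forces $n=2$, that is, $p=2$ and $e=1$. In this case $|S|=2$ and $G=C_4$, so $G^{\times}=\{1,3\}$ and $S=xy$ with $x,y$ odd. Each single term has odd sum, so it is never in $2G$. The pair $xy$ has sum in $2G$, and this sum must be $0$, since otherwise $T=S$ has length $2=p^e$ and sum in $(2G)^{\times}$. Hence $y=-x$ and $S=g(-g)$.

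I expect essentially no obstacle beyond checking that the hypotheses of Lemma \ref{Lemma 5.4} are met. Specifically, one should verify that $S$ lies in $\mathcal F(G\setminus nG)$ and that $(p^eG)^{\times}=p^eG\setminus\{0\}$ because $p^eG$ has prime order $p$. If one instead wanted a self-contained argument, the main work would be the one inside Lemma \ref{Lemma 5.4}. One first upgrades, via the argument of Lemma \ref{Lemma 3.2}(1), to the absence of \emph{any} subsequence with sum in $nG\setminus\{0\}$. Then Lemmas \ref{Lemma 3.4} and \ref{Lemma 3.5} show that $\sigma(T)$ is determined by $\pi(\sigma(T))$ for $T\mid S_1$, and Lemma \ref{Lemma 5.3} forces $\supp(S)\subseteq\{g,-g\}$.
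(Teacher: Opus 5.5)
Your proposal is correct and matches the paper's proof. The paper likewise notes $S\in\mathcal{F}(G^{\times})\subset\mathcal{F}(G\setminus p^eG)$ and $(p^eG)^{\times}=p^eG\setminus\{0\}$, then applies Lemma \ref{Lemma 5.4} with $n=p^e$, $k=p$ when $p^e\ge 3$; the only difference is that you verify the case $p^e=2$ explicitly (in $G=C_4$ the pair must sum to $0$), which the paper simply calls trivial.
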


\begin{proof}
The case $p^e=2$ is trivial and we assume that $p^e \ge 3$.
Note that $S\in \mathcal{F}(G^{\times}) \subset \mathcal{F}(G \setminus p^e G)$
and $(p^e G)^{\times}=p^e G \setminus \{0\}$ since $G=C_{p^{e+1}}$.
This lemma immediately follows from Lemma \ref{Lemma 5.4}
by taking $n=p^e$ and $k=p$.
\end{proof}

\noindent {\bf Proof of Theorem \ref{Theorem 1.5}.}
Let
$$
\mathbb{Z} \xrightarrow{\pi} \mathbb{Z}/np\mathbb{Z}
$$
where $\pi$ is the canonical homomorphism.
Noting that $n$ is a power of a prime $p$ and using Lemma \ref{Lemma 5.6} for $G=(\mathbb{Z}/np\mathbb{Z}, +)\cong C_{np}$, we have
$$
\pi(S)=g^{n-1}(-g)^{n-1}
\in \mathcal{F}(\mathbb{Z}/np\mathbb{Z}),
$$
where $g \in G^{\times}=(\mathbb{Z}/np\mathbb{Z})^{\times}$.

Let $g=\pi(c)=c+np\mathbb{Z}$ for some $c \in \mathbb{Z}$. Then $c$ is an integer coprime to $np$.
Therefore, $n-1$ terms of $S$ are congruent to $c$ modulo $np$ and the other $n-1$ terms of $S$ are congruent to $-c$ modulo $np$.

It is clear that such a sequence $S$ has no subsequence whose sum $\sigma$ satisfies $\gcd(\sigma, n^2)=n$.
\hfill$\Box$

\section{Concluding remarks}

Theorem \ref{Theorem 1.4} states that $\eta^{\times}_{n}=\mathsf{D}^{\times}_{n}=2n-1$ for a prime power $n$.
It is natural to consider the following problem.

\begin{problem}
    What are the values of $\eta_{n}^{\times}$  and $\mathsf{D}^{\times}_{n}$ for an integer $n$ with $\omega(n)\ge2$?
\end{problem}

Together with Theorem \ref{Theorem 1.4}, the following example implies that
$\eta_{n}^{\times}\ge \mathsf{D}_{n}^{\times}\ge2n+2\omega(n)-3$
for any integer $n\ge 2$.

\begin{example}\label{Example 6.2}
    Let $n$ be a positive integer with $\omega(n)=w\ge 2$
    and $\rad(n)=q_1\cdots q_w$ with distinct prime $q_1<\ldots<q_w$.
    Over the cyclic group $G=C_{n^2}=\langle g\rangle $, consider the sequence
    $$
    S=g^{n-1}(-g)^{n-1}((1+xn)g)^{w-1}((-1-xn)g)^{w-1},
    $$
    where the integer $x$ satisfies
    \begin{equation*}
    \left\{
    \begin{aligned}
            x & \equiv 0 \pmod{q_1}, \\
          x+1 & \equiv 0 \pmod{q_2}, \\
         2x+1 & \equiv 0 \pmod{q_3}, \\
              & \cdots \\
     (w-1)x+1 & \equiv 0 \pmod{q_w}.
    \end{aligned}
    \right.
    \end{equation*}
    Then $S\in\mathcal{F}(G^{\times})$ is of length $|S|=2n+2w-4$
    and $S$ has no subsequence $T$ such that $\sigma(T)\in (nG)^{\times}$.
\end{example}

By the Chinese Remainder Theorem and $q_i> i-1$ for $i\in[1,w]$, the congruences in Example \ref{Example 6.2} have a unique solution $x$ modulo $\rad(n)=q_1\cdots q_w$.
Thus, the sequence $S$ in Example \ref{Example 6.2} must exist for any integer $n$ with $\omega(n)\ge 2$.

Let
$$
T=g^{a_1}(-g)^{b_1}((1+xn)g)^{a_2}((-1-xn)g)^{b_2} \mid S
$$
with $\sigma(T)\in nG$.
It follows that
$$
a_1+a_2-b_1-b_2 \equiv 0 \pmod{n}.
$$
Since $0\le a_1, b_1 \le n-1$ and $0\le a_2, b_2 \le \omega(n)-1<n$,
we have
$$
a_1+a_2-b_1-b_2 \in \{-n, 0, n\}.
$$
If $a_1+a_2-b_1-b_2=0$, then
$$
\sigma(T)=(a_1+a_2-b_1-b_2+(a_2-b_2)xn)g=(a_2-b_2)xng \not\in (nG)^{\times}
$$
since $q_1 \mid \gcd(x, n)=\gcd(x, \exp(nG))$.
If $a_1+a_2-b_1-b_2=n$, then
$$
1\le n-a_1+b_1 \le a_2-b_2 \le w-1
$$
and
$$
\sigma(T)=((a_2-b_2)x+1)ng \not\in (nG)^{\times}
$$
since $\gcd((a_2-b_2)x+1, n)>1$ by the congruences in Example \ref{Example 6.2}.
If $a_1+a_2-b_1-b_2=-n$, then
$$
1\le n-b_1+a_1=b_2-a_2\le w-1
$$
and
$$
\sigma(T)=-((b_2-a_2)x+1)ng \not\in (nG)^{\times}
$$
since $\gcd((b_2-a_2)x+1, n)>1$ by the congruences in Example \ref{Example 6.2}.
In conclusion, it holds that $\sigma(T) \not\in (nG)^{\times}$.

\end{document}